\let\fullref\autoref
\theoremstyle{plain}
\newtheorem{proposition}{Proposition}[section]
\newtheorem{theorem}{Theorem}[section]
\newtheorem{corollary}{Corollary}[section]
\newtheorem{lemma}{Lemma}[section]
\theoremstyle{definition}
\newtheorem{definition}{Definition}[section]
\newtheorem{notation}{Notation}[section]
\newtheorem{question}{Question}[section]
\newtheorem{example}{Example}[section]
\theoremstyle{remark}
\newtheorem{remark}{Remark}[section]
\let\c@corollary=\c@theorem
\let\c@proposition=\c@theorem
\let\c@lemma=\c@theorem
\let\c@remark=\c@theorem
\let\c@definition=\c@theorem
\let\c@notation=\c@theorem
\let\c@construction=\c@theorem
\let\c@example=\c@theorem
\let\c@equation\c@theorem
\let\c@qyestion\c@theorem
\def\makeautorefname#1#2{\expandafter\def\csname#1autorefname\endcsname{#2}}
\newcommand{\ol}{\overline}
\newcommand{\sm}{\setminus}
\newcommand{\Q}{\mathbb{Q}}
\newcommand{\Z}{\mathbb{Z}}
\newcommand{\N}{\mathbb{N}}
\newcommand{\C}{\mathbb{C}}
\newcommand{\R}{\mathbb{R}}
\newcommand{\F}{\mathbb{F}}
\newcommand{\im}{\operatorname{Im}}
\newcommand{\ord}{\operatorname{ord}}
\newcommand{\tr}{\operatorname{tr}}
\newcommand{\Id}{\operatorname{Id}}
\newcommand{\GL}{\operatorname{GL}}
\newcommand{\Ext}{\operatorname{Ext}}
\newcommand{\Herm}{\operatorname{Herm}}
\newcommand{\Hom}{\operatorname{Hom}}
\newcommand{\pt}{\operatorname{pt}}
\newcommand{\sgn}{\operatorname{sgn}}
\newcommand{\cone}{\operatorname{cone}}
\newcommand{\coker}{\operatorname{coker}}
\newcommand{\eps}{\varepsilon}
\renewcommand{\epsilon}{\varepsilon}
\renewcommand{\phi}{\varphi}
\begin{document}
\title{Doubly slice knots and metabelian obstructions}

\author[Patrick Orson]{Patrick Orson}
\address{Department of Mathematics, Boston College, USA}
\email{patrick.orson@bc.edu}

\author[Mark Powell]{Mark Powell}
\address{Department of Mathematical Sciences, Durham University, UK}
\email{mark.a.powell@durham.ac.uk}

\begin{abstract}
For $\ell >1$, we develop $L^{(2)}$-signature obstructions for $(4\ell-3)$-dimensional knots with metabelian knot groups to be doubly slice. For each $\ell>1$, we construct an infinite family of knots on which our obstructions are non-zero, but for which double sliceness is not
obstructed by any previously known invariant.
\end{abstract}

\maketitle

\section{Introduction}

All manifolds considered in this article are topological and submanifolds are the images of locally flat embeddings (unless otherwise stated). An \emph{$n$-knot} is an oriented submanifold of $S^{n+2}$ homeomorphic to $S^n$.
An $n$-knot is \emph{slice} if it is ambiently isotopic to the equatorial cross section of some $(n+1)$-knot $J$, and is moreover  \emph{doubly slice} if $J$ may be taken to be the $(n+1)$-dimensional unknot.

For $n>1$, the question of which knots are slice is considered solved. All even-dimensional knots are slice~\cite{Kervaire:1965-1}, and odd-dimensional knots with $n>1$ are slice if and only if they are \emph{algebraically slice} (\fullref{def:algslice})~\cite{MR0246314}.  Kervaire and Levine worked in the smooth category, but after Marin provided topological transversality and Kirby-Siebenmann proved that codimension two locally flat embeddings have normal bundles, the Kervaire-Levine result also holds in the topological category.

The doubly slice problem is unsolved in every dimension. This is particularly striking for $n>1$, considering the effectiveness of classical Surgery Theory for studying codimension 2 embedding problems. The first stage to the Kervaire-Levine solution to the slice problem is to show that every $n$-knot $K$ with $n >1$ is concordant to a knot with knot group $\Z$. Because this can always be done, whether such a knot is slice has nothing to do with the particular knot group and only depends on the abelianisation. While similar abelian invariants do obstruct double sliceness, no such general simplification is possible for the doubly slice problem. This fact was first exploited by Ruberman~\cite{Ruberman-doubly-slice-I, Ruberman-doubly-slice-II} to provide examples in every dimension of algebraically doubly slice knots that are not doubly slice. These use the existence of non abelian representations of some knot groups, in the manner of the Casson-Gordon invariants~\cite{Casson-Gordon:1986-1}. The effectiveness of these obstructions highlights an interesting similarity between the high-dimensional doubly slice problem and the low-dimensional slice problem. In this paper we continue the study of non abelian doubly slice obstructions. Our main theorem is the following.

\begin{theorem}\label{thm:family}For all $\ell>1$, there exists an infinite family of mutually non ambiently isotopic, non doubly slice $(4\ell-3)$-knots~$\{K_i\}$, such that for each $K_i$:
\begin{enumerate}[leftmargin=*]
\item\label{item:main-thm-1} There is a $\Z[\Z]$-homology equivalence between the exterior of $K_i$ and the exterior of a doubly slice knot $J$, that is the identity on the boundary, so particular preserves meridians.
\item\label{item:main-thm-2} Ruberman's Casson-Gordon invariants do not obstruct $K_i$ from being doubly slice.
\end{enumerate}
\end{theorem}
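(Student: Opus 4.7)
The plan is to construct the knots $K_i$ by modifying a carefully chosen doubly slice seed knot $J$ via a satellite-type operation that preserves all the structure visible to $\Z[\Z]$-coefficient homology and to the Casson--Gordon invariants, but changes the data detected by the new metabelian $L^{(2)}$-signature obstruction developed in this paper. This strategy is the high-dimensional doubly-slice analogue of the Cochran--Orr--Teichner template for constructing low-dimensional non-slice knots that are invisible to all previously known obstructions.

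First, I would select a doubly slice $(4\ell-3)$-knot $J$ with metabelian knot group, realizing a Blanchfield pairing rich enough to admit many distinct metaboliser structures; in dimensions $n>1$ such seed knots are abundant by standard realization techniques. Given $J$, I would then choose an embedded loop $\alpha$ in the exterior $X_J$ whose class lies in the commutator subgroup of $\pi_1(X_J)$ and is null-homologous in the universal abelian cover. For each auxiliary classical $1$-knot $L_i$ drawn from a suitable infinite family, form $K_i$ by removing a tubular neighbourhood of $\alpha$ and gluing in the exterior of $L_i$ with meridian identified to meridian of $\alpha$, i.e.\ a high-dimensional satellite construction.

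Item~\ref{item:main-thm-1} then follows from a Mayer--Vietoris argument: since $\alpha$ is null-homologous in the universal abelian cover of $X_J$ and the exterior of $L_i$ has the $\Z[\Z]$-homology of an unknot exterior, the inclusion induces a $\Z[\Z]$-homology equivalence $X_{K_i} \to X_J$ that is the identity on $\partial X_{K_i}$, and composing with the equivalence induced by a double slicing of $J$ yields the required equivalence. Item~\ref{item:main-thm-2} follows because Ruberman's Casson--Gordon invariants depend only on data captured by the metabelian quotient of $\pi_1$ together with the Blanchfield pairing, all of which are unchanged by satellite on a curve that is sufficiently deep in the derived series; hence the Casson--Gordon invariants of $K_i$ agree with those of the doubly slice $J$ and vanish.

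To rule out that any $K_i$ is doubly slice, I would apply the metabelian $L^{(2)}$-signature obstruction. Under the satellite operation this obstruction changes by an explicit classical $L^{(2)}$-rho invariant of $L_i$, evaluated at the character determined by $\alpha$ and a choice of metaboliser pair for the Blanchfield form. By drawing the $L_i$ from a family of $1$-knots with strictly varying $L^{(2)}$-rho invariants at the relevant character (for example, iterated connected sums of a fixed knot with nonzero integrated Levine--Tristram signature, producing unbounded values), one obtains $K_i$ with distinct nonzero obstruction values, hence pairwise non-ambient-isotopic and each not doubly slice. The main obstacle, which the preceding sections of the paper are designed to overcome, is verifying that the obstruction is genuinely nonzero against \emph{every} candidate doubly slicing: the obstruction a priori depends on a choice of pair of complementary metabolisers of the Blanchfield pairing corresponding to the two halves of the slicing unknot, and one must show that for every such admissible pair the additive contribution of the satellite cannot be cancelled. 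This is where the algebraic analysis of metaboliser pairs in the metabelian setting, together with Cheeger--Gromov type estimates on the chosen infection knots, does the essential work.
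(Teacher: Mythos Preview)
Your proposal has several genuine gaps that prevent it from going through, and the paper's actual proof takes a fundamentally different route.

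\textbf{Dimensional mismatch.} The satellite construction you describe---remove a tubular neighbourhood of a curve $\alpha \subset X_J$ and glue in the exterior of a $1$-knot $L_i$---does not make sense when $\dim X_J = 4\ell-1 > 3$. The boundary of $\nu\alpha$ is $S^1 \times S^{4\ell-3}$, while $\partial E_{L_i}$ is a $2$-torus; these cannot be identified. The high-dimensional substitute for infection is precisely Wall/Cappell--Shaneson realisation, and that is what the paper uses (Corollary~\ref{cor:newknots}): one produces a cobordism $Z$ from $M_J$ to $M_{K'}$ whose $\Z[\pi_J]$-intersection form is a prescribed even hermitian matrix $U$, and the $L^{(2)}$ $\rho$-invariant changes by the $L^{(2)}$-signature of $U$.

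\textbf{Incompatibility with the metabelian hypothesis.} The obstruction theorem (Theorem~\ref{thm:main}) requires $\pi_{K_i}$ to be metabelian. You ask that $J$ have metabelian group \emph{and} that $\alpha$ be null-homologous in the universal abelian cover, i.e.\ $\alpha \in \pi_J^{(2)}$. But $\pi_J^{(2)} = \{e\}$, so $\alpha$ is null-homotopic and any satellite along it is trivial. If instead $\alpha$ is only in $\pi_J^{(1)}$ and nontrivial in $H_1(\overline{X}_J)$, then (even granting a sensible construction) the resulting $\pi_{K_i}$ is no longer metabelian, so Theorem~\ref{thm:main} does not apply; and at that level infection \emph{does} typically change Casson--Gordon--type invariants, contradicting your argument for item~(\ref{item:main-thm-2}).

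\textbf{How the paper actually handles (\ref{item:main-thm-2}) and the metaboliser ambiguity.} The paper does not argue that the modification preserves Ruberman's invariants. Instead it chooses the Alexander module to be $\Z[\Z]/\Phi_m(t)$ with $m$ divisible by at least three distinct primes; then Livingston's theorem (Theorem~\ref{thm:livingston}) forces $H_1(\Sigma_r(K_i)) = 0$ for every prime-power $r$, so there are no nontrivial characters and Ruberman's invariants are vacuous. The ``every metaboliser pair'' obstacle you flag is dispatched by taking $H = \Z[\Z]/\Phi_m$ \emph{irreducible}: the only decomposition $H \cong A \oplus B$ is with one summand zero, so there is exactly one representation to check. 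The infinite family arises by varying $m$, not by varying an infection knot.
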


Point (\ref{item:main-thm-1}) implies that the knots $K_i$ have hyperbolic Seifert and Blanchfield pairings, so are algebraically doubly slice in the sense of Sumners \cite{MR0290351}. In particular every $K_i$ is slice. Points (\ref{item:main-thm-1}) and (\ref{item:main-thm-2}) combine to show that no previously known obstruction could detect the fact that these knots are not doubly slice.

\subsection*{Metabelian groups, $L^{(2)}$ signature obstructions and concordance}
Recall that for a group $G$, the \emph{$i$th derived subgroup} $G^{(i)}$ is defined inductively by~$G^{(0)}:=G$ and $G^{(i+1)}:=[G^{(i)},G^{(i)}]$. We say a group $G$ is \emph{metabelian} if $G^{(2)}=0$. In other words, a group is metabelian if it has abelian commutator subgroup.

The classical \emph{algebraic concordance} invariants used by Levine~\cite{MR0246314} rely only on the representation of the knot group given by abelianisation. The abelianisation of a group is the first subquotient of the derived series of the group. For $1$-knots, it is now known that \emph{each further} subquotient of the derived series of the knot group can be used to provide obstructions beyond those of algebraic concordance~\cite{MR1973052}.  In this way the topological knot concordance group can be filtered. The first examples of non abelian concordance obstructions were the Casson-Gordon invariants~\cite{Casson-Gordon:1986-1}, which may be interpreted as metabelian-level obstructions in the Cochran-Orr-Teichner filtration of the (topological) knot concordance group~\cite[\textsection 9]{MR1973052}.

To obstruct deeper stages of the filtration, Cochran, Orr and Teichner developed the use of $L^{(2)}$ signature defects, also called $L^{(2)}$ $\rho$-invariants. The use of $L^{(2)}$ cohomology for the study of non compact manifolds has a long history, for which we refer the reader to the book of L\"{u}ck \cite{Lueck:2002-1}. The use of these techniques to study knot concordance began with the work of Cochran-Orr-Teichner, where the non compact manifold in question is the covering space of the knot exterior corresponding to a representation of the the knot group to an infinite group. These techniques were further developed by Cha-Orr and by Cha~\cite{Cha-Orr:2009-01,MR3257550}. Since Cochran-Orr-Teichner, these ideas have been successfully applied to the study of concordance of $1$-knots by many authors, including \cite{Kim:2005-1}, \cite{Cochran-Kim:2004-1}, \cite{Cochran-Harvey-Leidy:2009-1}, \cite{Cochran-Harvey-Leidy:2011-02}, and~\cite{Franklin:2013}. Particularly relevant to this paper is the work of Taehee Kim~\cite{Kim:2006-1}, who used $L^{(2)}$ $\rho$-invariants to obstruct double sliceness of $1$-knots.

Inspired by these techniques, and by the work of Ruberman, this paper investigates further applications of low-dimensional slice obstructions to the high-dimensional doubly slice problem. We will focus on $n=4\ell-3$, where $\ell>1$. When the fundamental group of the knot complement is metabelian, we develop new obstructions to being doubly slice, based on $L^{(2)}$ $\rho$-invariants, and analogous to Kim's obstructions~\cite{Kim:2006-1}. We use Wall realisation to produce an infinite family of non isotopic knots in dimensions $4\ell-3>1$. The knots in this family are obstructed from being doubly slice by our invariants. By making careful choices in our construction, we prove that our knots cannot be shown to be non doubly slice by any previous method.

\subsection*{Further questions}
The most classical obstruction to double sliceness is \emph{algebraic double sliceness} (\fullref{def:algslice}), which is an abelian obstruction. In Section~\ref{section:first-order-obstructions} we will discuss abelian obstructions to double sliceness more generally, proving that (\ref{item:main-thm-1}) implies the knots~$K_i$ from \fullref{thm:family} are algebraically doubly slice in the stronger algebraic sense of~\cite{Levine-doubly-slice} and in the stronger algebraic sense of~\cite{MR3604378}. The subject of this paper is the existence of doubly slice obstruction beyond the abelian, but (stepping backwards a little) the class of knots for which even these strong algebraic doubly slice conditions characterise double sliceness is still unknown. More precisely the following is open.

\begin{question}\label{q:1} Is an algebraically doubly slice knot with knot group $\Z$ necessarily doubly slice? Is an algebraically doubly slice knot strongly algebraically doubly slice?
\end{question}

There are two natural generalisations of the results of this paper. They both present significant technical challenges.

\begin{question}\label{q:2} When $n\not\equiv 1\pmod 4$, do there exists strongly algebraically doubly slice $n$-knots with metabelian knot groups that are not doubly slice but for which Ruberman's invariants are ineffective for detection?
\end{question}

\begin{question}\label{q:3} Can further subquotients of the derived series of the knot group be exploited to obstruct double sliceness high-dimensionally?
\end{question}

The challenge in answering \fullref{q:2} is that in these dimensions modulo 4, there is no developed obstruction theory coming from $L^{(2)}$ cohomology. 

Considering the filtration framework established in \cite{MR1973052}, answering \fullref{q:3} seems like another natural next step. But that the knot group was metabelian was important to the proof of our main obstruction \fullref{thm:main} because of the way the knot group interacts with the slice disc groups. The challenge of extending our techniques beyond the metabelian level is that the fundamental groups of the exteriors of complementary slice discs for a doubly slice knot do not necessarily interact with representations of the knot group in such a natural way.

\subsection*{Acknowledgements}
We thank Stefan Friedl and Matthias Nagel for helpful discussions.


\section{Conventions and preliminary results}

For $X$ a manifold and $Y\subset X$ a proper, neat submanifold of codimension either 1 or 2 there is an open tubular neighbourhood; see \cite{MR0645390} for ambient dimension $\neq 4$ and~\cite[Section 9.3]{Freedman-Quinn:1990-1} for $\dim X =4$. Denote such a neighbourhood by $\nu Y\subset X$.

For $G$ a
group, the group ring $\Z[G]$ has an involution $\lambda\mapsto \ol{\lambda}$ defined by linearly extending the involution $\ol{g}:=g^{-1}$. For a left $\Z[G]$-module $A$, let $\ol{A}$ denote the right ${\Z[G]}$-module with the same underlying abelian group and with the ${\Z[G]}$-action $a\cdot\lambda=\ol{\lambda}\cdot a$. Modules are \emph{left} modules unless specified otherwise. Write $\Herm_{r\times r}(\Z[G])$ for the group of $r\times r$ matrices $U$ with values in $\Z[G]$ and such that $U^T=\overline{U}$. We say $U\in\Herm_{r\times r}(\Z[G])$ is \emph{non-degenerate} if the corresponding endomorphism of $(\Z[G])^r$ is injective, and \emph{nonsingular} if it is moreover an isomorphism.

Let $K$ be an $n$-knot. Write $X_K := S^{n+2} \sm \nu K(S^n)$ for the \emph{exterior}. Write $\pi_K := \pi_1(X_K)$ for the \emph{group of the knot}. Write $M_K:=X_K\cup_{S^1\times S^n} S^1\times D^{n+1}$ for the  \emph{surgery manifold}. When $n=1$, this glueing is specified by using the $0$-framing of the knot. When $n>1$, any homeomorphism of $S^1\times S^n$ extends over $S^1\times D^{n+1}$, so $M_K$ is well-defined up to homeomorphism. Indeed,
Gluck, Browder and Kato~\cite{Gluck, MR212816, Kato-concordance} showed that for every $n \geq 2$, the group of pseudo-isotopy classes of PL-homeomorphisms of $S^1 \times S^n$ is $(\Z/2)^3$, generated by reflections in the $S^n$ and $S^1$ factors, and the Gluck twist $(\theta,x) \mapsto (\theta,\rho(\theta)\cdot x)$, with $\rho \colon S^1 \to SO(n+1)$ a homotopically essential map. Each of these maps extends over $S^1 \times D^{n+1}$, and so every PL-homeomorphism of $S^1 \times S^n$ extends.  A homeomorphism $f \colon M \to N$ between closed PL manifolds is homotopic through homeomorphisms to a PL homeomorphism if and only if the Casson-Sullivan invariant  $\kappa(f) \in H^3(M;\Z/2)$ vanishes~\cite{hauptvermutung-book}.  For $n=4\ell-3$, $H^3(S^1 \times S^n;\Z/2)=0$, so every homeomorphism is homotopic to a PL homeomorphism. It follows that every homeomorphism of $S^1 \times S^n$ extends to a homeomorphism of $S^1 \times D^{n+1}$.

 By Alexander duality, $X_K$ has the homology of $S^1$ and hence the abelianisation $\pi_1(X_K)\to \Z$ determines an infinite cyclic cover $\ol{X}_K\to X_K$. The inclusion $X_K\subset M_K$ induces an isomorphism $\pi_1(X_K)\cong \pi_1(M_K)$ and so there is also an infinite cyclic cover $\ol{M}_K\to M_K$. With respect to these covers, we take homology with coefficients in $\Z[\Z]$ and, as $\ol{M}_K$ is of the form $\ol{M}_K\cong \ol{X}_K\cup_{\R\times S^n}\R\times D^{n+1}$, we have that $H_*(X_K;{\Z[\Z]})\cong H_*(M_K;{\Z[\Z]})$.

For a ${\Z[\Z]}$-module $A$, let $T_{\Z} A\subset A$ denote the $\Z$-torsion submodule and let $FA:=A/T_{\Z} A$ denote the $\Z$-torsion free quotient module.

\begin{definition}
Write  $S := \{p(t)\in\Z[t,t^{-1}] \mid p(1)=\pm1 \} \subset \Z[\Z]$ for the \emph{Alexander polynomials}.
\end{definition}

A finitely generated $\Z[\Z]$-module $A$ is \emph{torsion with respect to $S$} if there exists $p\in S$ such that $pA=0$. The module $A$ is torsion with respect to $S$ if and only multiplication by $1-t$ is an automorphism of $A$; see e.g.~\cite[Lemma 4.5]{MR3604378}. The reduced homology $\widetilde{H}_*(X_K;{\Z[\Z]})$ is torsion with respect to $S$; see~\cite[1.3]{Levine-77-knot-modules}. Writing $\Q(\Z)$ for the field of fractions of~${\Z[\Z]}$, this implies that the reduced homology $\widetilde{H}_*(M_K;\Q(\Z))\cong \widetilde{H}_*(X_K;\Q(\Z))=0$. We will call a finitely generated $\Z[\Z]$-module chain complex \emph{$S$-acyclic} if the homology is torsion with respect to $S$.

A \emph{slice disc} for an $n$-knot $K$ is a proper, neat submanifold $(D,K)\subset (D^{n+3},S^{n+2})$, homeomorphic to $(D^{n+1},S^{n})$.
Recall that $K$ is slice if and only if $K$ is the boundary of a slice disc, and that $K$ is doubly slice if and only if $K$ is the boundary of two slice discs $D_\pm$ that are \emph{complementary}, in the sense that $D_+\cup_K D_-$ is unknotted in $S^{n+3}$.

\subsection{Cyclic branched covers of $n$-knots}
Let $r\in\N$ and let $K$ be an $n$-knot. Denote by $\Sigma_r(K)$ the \emph{$r$-fold cyclic branched cover} of $S^{n+2}$ branched over $K$. Later in this paper we would like to use several results about cyclic branched covers from the low-dimensional topology literature. We now state these results and check they still apply high-dimensionally.

\begin{notation}\label{not:primed}Given a group homomorphism $\xi\colon \pi\to G$ to a finite group $G$, define $\xi'\colon \pi\to G\to U(\C [G])$, the canonically induced unitary representation given by left multiplication. Note $\C [G]$ is a finite dimensional vector space of dimension $|G|$.
\end{notation}

\noindent We record a straightforward lemma for later use.

\begin{lemma}\label{lem:obvs} Let $\pi$ be group and $T$ be a finitely generated $\Z[\pi]$-module. Suppose $\phi\colon \pi\to G$ is a homomorphism to a finite group. Then there is a natural decomposition of complex vector spaces
\[
\C[G]\otimes_{\phi'}T\cong \sum_{\alpha}V_\alpha^{\dim (\alpha)}\otimes_{\alpha\circ\phi'} T
\]
where $\alpha$ ranges over all conjugacy classes of complex irreducible unitary representations of $G$, and $V_\alpha$ is the irreducible representation at $\alpha$.
\end{lemma}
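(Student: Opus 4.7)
The plan is to deduce this lemma directly from the classical Wedderburn decomposition of the regular representation of a finite group. First I would recall that, as a left $G$-module under the regular representation on $\C[G]$, Maschke's theorem together with the standard Wedderburn / Peter--Weyl analysis of $\C[G]$ produces a canonical $G$-equivariant isomorphism
\[
\C[G]\;\cong\;\bigoplus_{\alpha} V_\alpha^{\dim(\alpha)},
\]
where $\alpha$ ranges over conjugacy classes of complex irreducible unitary representations of $G$ and each irreducible summand appears with multiplicity equal to its dimension.

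Next I would observe that this isomorphism is automatically $\Z[\pi]$-equivariant with respect to the actions appearing in the statement. Indeed, $\phi'$ is by definition the composition $\pi\to G\to U(\C[G])$, so the left $\Z[\pi]$-action on $\C[G]$ coming from $\phi'$ is just the restriction along $\phi$ of the regular $G$-action. Likewise the $\Z[\pi]$-action on $V_\alpha$ labelled $\alpha\circ\phi'$ is, up to the identification of $\phi'$ as the factorisation through $G$, the restriction along $\phi$ of the irreducible representation $\alpha$. Pulling the displayed Wedderburn decomposition back along $\phi$ therefore gives an isomorphism of left $\Z[\pi]$-modules.

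Finally I would apply $-\otimes_{\Z[\pi]}T$ to both sides. Tensor product distributes over direct sums, so
\[
\C[G]\otimes_{\phi'}T\;\cong\;\bigoplus_{\alpha} V_\alpha^{\dim(\alpha)}\otimes_{\alpha\circ\phi'}T,
\]
which is exactly the claimed decomposition. Naturality in $T$ (and in $\phi$) is inherited from the functoriality of tensor product and the canonicity of the Wedderburn decomposition. There is essentially no substantive obstacle: the entire content is the isotypic decomposition of the regular representation tensored through $-\otimes_{\Z[\pi]} T$. The only check worth singling out is that the Wedderburn isomorphism is genuinely $\Z[\pi]$-equivariant rather than merely $G$-equivariant, which is automatic because every $\pi$-action in sight is defined by restricting a $G$-action along $\phi$.
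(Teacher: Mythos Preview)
Your proposal is correct and takes essentially the same approach as the paper: the paper's entire proof consists of citing the decomposition $\C[G]\cong\sum_\alpha V_\alpha^{\dim(\alpha)}$ of the left regular representation, leaving the tensoring step implicit. You have simply unpacked that step, together with the (automatic) $\Z[\pi]$-equivariance via restriction along $\phi$, more carefully than the paper does.
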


\begin{proof}
The left regular representation for a finite group $G$ is conjugate to the sum of representations
\[
\C[G]\cong\sum_{\alpha}V_\alpha^{\dim (\alpha)}.
\]
\end{proof}

For the convenience of the reader, we provide a proof of the following well known fact, originally observed by Fox \cite{Fox:1956-1}.  We were not able to find a complete proof in the literature (Gordon~\cite{Gordon-branched, MR521730} comments that Fox's proof requires modification), so we provide one here.

\begin{proposition}\label{prop:fox}
Suppose $0\to P\xrightarrow{A} P\to T\to 0$ is a short exact sequence of $\Z[t,t^{-1}]$-modules such that $P$ is finitely generated and free. Let $C_r=\{e,x,\dots, x^{r-1}\}$ denote the cyclic group of order $r\in\N$, and define a homomorphism $\phi\colon \Z\langle t\rangle\to C_r$ by $\phi(t)=x$.  Then $\Z[C_r]\otimes_\phi T$, considered as an abelian group, has order
\[
\pm\prod_{j=0}^{r-1}\det(A(\exp(2\pi ij/r))),
\]
where if this product is 0, we interpret this as infinite order.
\end{proposition}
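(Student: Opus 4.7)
The plan is to compute the cokernel of $1 \otimes A$ after tensoring the short exact sequence with $\Z[C_r]$, and then to evaluate the resulting $\Z$-linear determinant on a complexified source using the character decomposition of $C_r$.

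First I would tensor the short exact sequence with $\Z[C_r]$ over $\Z[t,t^{-1}]$ (where $\Z[C_r]$ is viewed as a right $\Z[t,t^{-1}]$-module via $\phi$), obtaining the right-exact sequence
\[
\Z[C_r] \otimes_\phi P \xrightarrow{1 \otimes A} \Z[C_r] \otimes_\phi P \longrightarrow \Z[C_r] \otimes_\phi T \longrightarrow 0.
\]
Writing $n$ for the rank of $P$ and choosing a basis, $\Z[C_r] \otimes_\phi P \cong \Z[C_r]^n$, and $1 \otimes A$ is the endomorphism given by the matrix $A(x)$ obtained from $A$ by substituting $t \mapsto x \in C_r$. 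As an abelian group the source and target are free of rank $rn$, so by the theory of the Smith normal form, the cokernel $\Z[C_r] \otimes_\phi T$ has order equal to $|\det_\Z(1 \otimes A)|$ if the determinant is nonzero and is infinite otherwise.

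Next I would compute $\det_\Z(1 \otimes A)$ by complexifying: for any $\Z$-linear endomorphism $f$ of a free abelian group, $\det_\Z f = \det_\C(f \otimes \C)$, so it suffices to compute the determinant of $1 \otimes A$ viewed as a $\C$-linear endomorphism of $\C[C_r]^n$. Here I would invoke \fullref{lem:obvs}: since $C_r$ is abelian, its irreducible complex representations are the $r$ one-dimensional characters $\alpha_j \colon x \mapsto \exp(2\pi i j/r)$ for $j = 0, \dots, r-1$. The decomposition of \fullref{lem:obvs} applied to $T' = P$ gives
\[
\C[C_r] \otimes_\phi \C P \;\cong\; \bigoplus_{j=0}^{r-1} \C_{\alpha_j \circ \phi} \otimes_{\Z[t,t^{-1}]} P,
\]
and this decomposition is natural enough that $1 \otimes A$ becomes block diagonal with $j$-th block given by the matrix $A(\exp(2\pi i j/r))$ acting on $\C^n$.

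Finally, taking determinants block by block gives
\[
\det_\C(1 \otimes A) \;=\; \prod_{j=0}^{r-1} \det\bigl(A(\exp(2\pi i j / r))\bigr),
\]
which establishes the claimed formula (with the convention that a vanishing product corresponds to an infinite cokernel). The main obstacle is really bookkeeping: one must confirm that the $\C$-linear determinant of $1 \otimes A$ agrees, up to sign, with the $\Z$-linear determinant (straightforward but easy to bungle), and that the character decomposition of $\C[C_r]$ genuinely diagonalises the substitution operator $A(x)$, rather than just yielding a block upper-triangular form.
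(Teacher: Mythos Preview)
Your proposal is correct and follows essentially the same approach as the paper: tensor up to obtain the right-exact sequence, identify the order of the cokernel with the absolute value of the $\Z$-linear determinant of $1\otimes A$, complexify, and then use the character decomposition of $\C[C_r]$ (via \fullref{lem:obvs}) to block-diagonalise and obtain the product formula. The paper carries out the final computation slightly more explicitly by writing $A(t)=\sum_i A_i t^i$ and tracking the action of each character on the coefficients, but this is a presentational rather than a mathematical difference.
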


\begin{proof}
Observe that, writing $k$ for the free rank of $P$ and choosing a free basis,  $A$ can be written as a sum $A(t)=\sum_{-\infty}^\infty A_i t^i$ for some $A_i\in\GL(k,\Z)$, such that $A_i$ is nonzero for only finitely many $i$. We will use this later.

The presentation of $T$ determines an exact sequence of $\Z[C_r]$-modules
\[
 \Z[C_r]\otimes_\phi P \xrightarrow{\Id\otimes_\phi A} \Z[C_r]\otimes_\phi P\to \Z[C_r]\otimes_\phi T\to 0.
\]
Choose an isomorphism $\Z[C_r]\cong \Z^r$ of free $\Z$-modules. We may now view $\Id\otimes_\phi A$ as an endomorphism of a rank $rk$ free abelian group. The determinant of this endomorphism is nonzero if and only if $\Z[C_r]\otimes_\phi T$ is a finite abelian group, in which case the absolute value of the determinant computes the order of the group.

We calculate this determinant by passing to complex coefficients and considering $\C[G]\otimes_\phi P\cong \C^r\otimes_\phi P$. Apply \fullref{lem:obvs} with $\pi=\Z$, $G=C_r$, noting that the irreducible unitary representations of $C_r$ are the complex 1-dimensional representations $\chi_0, \chi_1,\dots,\chi_{r-1}$ where $\chi_j(x)=\omega_r^j$. We write $\phi_j=\chi_j\circ\phi'$. Thus as an endomorphism of the vector space $\C^r\otimes_\phi P$, we calculate that
\begin{align*}
\det(\Id\otimes_\phi A)&=\det\Big(\Id\otimes_\phi\Big(\sum_i A_it^i\Big)\Big)=\det\Big(\sum_i x^i\otimes_\phi A_i\Big) \\
&=\det\bigg(\bigoplus_{j=1}^{r-1}\Big(\sum_i\omega_r^{ij}\otimes_{\phi_j} A_i\Big)\bigg)
=\prod_{j=0}^{r-1}\det\Big(\sum_i(\omega_r^j)^i\otimes_{\phi_j} A_i\Big) \\
&=\prod_{j=0}^{r-1}\det\left(\Id_\C\otimes_{\phi_j} A(\omega_r^j)\right)=\prod_{j=0}^{r-1}\det\left(A(\omega_r^j)\right),
\end{align*}
as claimed.
\end{proof}

\begin{definition}
For a knot $K$ define the \emph{Alexander polynomial} to be any generator of the minimal principal ideal that contains the first elementary ideal of $H_1(\ol{X}_K;\Z)$. The polynomial $\Delta_K(t)$ is well-defined up to units in $\Z[t,t^{-1}]$.
\end{definition}

\begin{remark}\label{rem:determinant}
In the case that the first elementary ideal of $H_1(\ol{X}_K;\Z)$ is principal, as is always the case for $1$-knots, there is an exact sequence
\[
0\to P \xrightarrow{A} P\to H_1(\ol{X}_K;\Z)\to 0,
\]
where $P$ is a finitely generated free $\Z[t,t^{-1}]$-module. In this case, $\det(A)=\Delta_K(t)$.
\end{remark}

\begin{corollary}\label{prop:foxcheck} Let $K$ be an $n$-knot and suppose that $\pi_K$ has a deficiency 1 presentation. Then for all $r\in\N$,  the abelian group $H_1(\Sigma_r(K);\Z)$ has order
\[
\pm\prod_{j=0}^{r-1}\Delta_K(\exp(2\pi ij/r)),
\]
where if this product is 0, we interpret this as infinite order.
\end{corollary}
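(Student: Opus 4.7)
The plan is to combine \fullref{prop:fox} with a standard identification of $H_1(\Sigma_r(K);\Z)$ as a tensor product. Two ingredients are required: a square presentation $0 \to P \xrightarrow{A} P \to H_1(\ol{X}_K;\Z) \to 0$ with $\det(A) = \Delta_K(t)$, and an isomorphism $H_1(\Sigma_r(K);\Z) \cong \Z[C_r] \otimes_\phi H_1(\ol{X}_K;\Z)$.

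For the first ingredient, I would start from a deficiency 1 presentation $\pi_K = \langle g_1,\ldots,g_k \mid r_1,\ldots,r_{k-1}\rangle$, with $\phi(g_i) = t^{m_i}$ under abelianisation. The cellular chain complex of the $\phi$-cover of the presentation $2$-complex $W$ reads
\[
\Z[t,t^{-1}]^{k-1} \xrightarrow{\partial_2} \Z[t,t^{-1}]^k \xrightarrow{\partial_1} \Z[t,t^{-1}],
\]
with $\partial_2$ the abelianised Fox Jacobian and $\partial_1(e_i) = t^{m_i}-1$. Since $\pi_K^{\ab} = \Z$ forces $\gcd(m_i) = 1$, the image of $\partial_1$ is the ideal $(t-1)$, which is free of rank one; hence $\ker(\partial_1)$ is free of rank $k-1$, and the $H_1$ of the above complex admits a square $(k-1)\times(k-1)$ presentation matrix $A$. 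Since $H_1(\ol{W};\Z) \cong [\pi_K,\pi_K]^{\ab} \cong H_1(\ol{X}_K;\Z)$, this $A$ presents the Alexander module and $\det(A) = \Delta_K(t)$ (up to units) by \fullref{rem:determinant}.

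For the second ingredient, I would decompose $\Sigma_r(K) = X_K^{(r)} \cup_{S^1 \times S^n} D^2 \times S^n$, where $X_K^{(r)}$ is the $r$-fold cyclic cover of $X_K$ and $D^2 \times S^n$ is a tubular neighbourhood of the lifted branch set. The Wang sequence for the infinite cyclic cover $\ol{X}_K \to X_K^{(r)}$ (deck group generated by $t^r$) yields a short exact sequence
\[
0 \to H_1(\ol{X}_K;\Z)/(t^r-1) \to H_1(X_K^{(r)};\Z) \to \Z \to 0,
\]
whose $\Z$ quotient is generated by the lifted meridian $[\tilde\mu]$. Mayer--Vietoris for the gluing, using $H_1(D^2\times S^n;\Z) = 0$ for $n \geq 2$ (with Fox's classical argument handling $n=1$), identifies $H_1(\Sigma_r(K);\Z)$ with the quotient of $H_1(X_K^{(r)};\Z)$ by $[\tilde\mu]$, which is $H_1(\ol{X}_K;\Z)/(t^r - 1) = \Z[C_r]\otimes_\phi H_1(\ol{X}_K;\Z)$.

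Combining the two, \fullref{prop:fox} applied to $A$ computes the order of $\Z[C_r] \otimes_\phi H_1(\ol{X}_K;\Z)$ as $\pm\prod_{j=0}^{r-1} \det(A(\exp(2\pi ij/r))) = \pm\prod_{j=0}^{r-1}\Delta_K(\exp(2\pi ij/r))$, with infinite order when the product vanishes. The main technical care lies in the first step: verifying that the deficiency 1 hypothesis really produces the required square presentation in the higher-dimensional setting, where unlike for $1$-knots there is no Wirtinger presentation to lean on. Once this is in hand, the Wang/Mayer--Vietoris identification and the application of \fullref{prop:fox} are routine.
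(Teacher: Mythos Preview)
Your proposal is correct and follows essentially the same route as the paper: both arguments obtain a square presentation of the Alexander module from the deficiency~1 hypothesis (so that \fullref{rem:determinant} applies), identify $H_1(\Sigma_r(K);\Z)$ with $\Z[C_r]\otimes_\phi H_1(\ol{X}_K;\Z)$, and then apply \fullref{prop:fox}. The paper merely asserts the first step and cites Gordon for the second, whereas you sketch both via Fox calculus and a Wang/Mayer--Vietoris argument; the only point worth tightening is the passage from ``$\ker(\partial_1)$ is stably free'' to ``free of rank $k-1$'' over $\Z[t,t^{-1}]$, which needs a word of justification.
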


\begin{proof}
If a group $G$ has $G/G^{(1)}\cong \Z$, and $G$ has a deficiency 1 presentation then the first elementary ideal of $G^{(1)}/G^{(2)}$ is principal. The module $\Z[C_r]\otimes_\phi H_1(\ol{X}_K;\Z)$ is well-known to be equal to $H_1(\Sigma_r(K);\Z)$; see \cite[Section 5]{MR521730} for a proof when $n=1$ that easily generalises to $n\geq 1$. Combining this with \fullref{prop:fox} and \fullref{rem:determinant}, the result follows.
\end{proof}

The following theorem is a result of Livingston \cite[Theorem 1.2]{Livingston:2002-1} when $n=1$. The following high-dimensional version is also true.

\begin{theorem}\label{thm:livingston} Let $K$ be an $n$-knot such $\pi_K$ has a deficiency 1 presentation, and suppose that $r\in\N$ is a prime power. If all irreducible factors of $\Delta_K$ are cyclotomic polynomials $\Phi_m(t)$, with $m$ divisible by at least three distinct primes, then $H_1(\Sigma_r(K);\Z)=0$.
\end{theorem}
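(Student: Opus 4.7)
The plan is to leverage \fullref{prop:foxcheck} to reduce the theorem to a number-theoretic computation of resultants of cyclotomic polynomials. Under our hypotheses, \fullref{prop:foxcheck} says $|H_1(\Sigma_r(K);\Z)| = \pm\prod_{j=0}^{r-1}\Delta_K(\omega_r^j)$, where $\omega_r := \exp(2\pi i/r)$. Writing $\Delta_K$ as a product of cyclotomic factors $\Phi_{m_1}\cdots \Phi_{m_s}$, the full product splits accordingly, so it suffices to prove that for a single cyclotomic factor $\Phi_m$ with $m$ divisible by three distinct primes and $r$ a prime power, one has $\prod_{j=0}^{r-1}\Phi_m(\omega_r^j) = \pm 1$. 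This will force $|H_1(\Sigma_r(K);\Z)|=1$, so the group is trivial.

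The key computation is to recognise this product as (up to a sign $(-1)^{r\varphi(m)}$) the resultant $\operatorname{Res}(\Phi_m(x), x^r-1) = \prod_{\xi} (\xi^r - 1)$, where $\xi$ runs over primitive $m$-th roots of unity. Setting $d:=\gcd(m,r)$, as $\xi$ ranges over primitive $m$-th roots of unity, $\xi^r$ ranges over primitive $(m/d)$-th roots of unity, with each such root appearing $\varphi(m)/\varphi(m/d)$ times. Therefore
\[
\prod_{\xi} (\xi^r - 1) = (-1)^{\varphi(m)} \Phi_{m/d}(1)^{\varphi(m)/\varphi(m/d)}.
\]
Since $\varphi(m)$ is even (as $m\geq 30$), the sign prefactor is $+1$, and the whole thing reduces to understanding $\Phi_{m/d}(1)$.

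Now I would invoke the classical evaluation of cyclotomic polynomials at $1$: namely $\Phi_n(1)=p$ if $n$ is a prime power $p^k$ (with $k\geq 1$), and $\Phi_n(1)=1$ if $n>1$ has at least two distinct prime factors. The heart of the argument is a short case analysis showing $m/d$ falls in the second regime. Because $r$ is a prime power, so is $d$, say $d = q^a$. Since $m$ has at least three distinct prime factors while $d$ has at most one, the quotient $m/d$ has at least two distinct prime factors (in particular $m/d\neq 1$, so $\Phi_{m/d}(1)\neq 0$). Hence $\Phi_{m/d}(1)=1$ and the resultant equals $+1$, which completes the proof after the sign adjustment.

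The only delicate point is keeping the sign bookkeeping and the multiplicity count $\varphi(m)/\varphi(m/d)$ straight; everything else is a direct transcription of Livingston's argument to the high-dimensional setting, which is legitimate because \fullref{prop:foxcheck} already supplied the needed order formula for $n$-knots with a deficiency one presentation, so no low-dimensional input enters the proof.
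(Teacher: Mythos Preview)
Your proposal is correct and follows essentially the same route as the paper: the paper's proof consists entirely of invoking \fullref{prop:foxcheck} and then deferring to Livingston's original argument, which is precisely the resultant computation you have written out. Your explicit verification that $\prod_{j=0}^{r-1}\Phi_m(\omega_r^j)=\pm 1$ via the identification with $\operatorname{Res}(\Phi_m,x^r-1)$, the reduction to $\Phi_{m/d}(1)$, and the observation that $m/d$ retains at least two prime factors because $d\mid r$ is a prime power, is exactly Livingston's proof; the uniformity of the multiplicity $\varphi(m)/\varphi(m/d)$ follows from Galois transitivity on primitive $(m/d)$-th roots, so that step is sound as well.
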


\begin{proof} Use \fullref{prop:foxcheck} and then proceed exactly as in the proof of \cite[Theorem 1.2]{Livingston:2002-1}.
\end{proof}

\subsection{Knots with metabelian group}
For $n\geq 3$, $n$-knots with a given knot group can be constructed using the following theorem of Kervaire \cite[Th\'{e}or\`{e}me I.1]{Kervaire:1965-1}.

\begin{theorem}[Kervaire]\label{thm:Kervaire}
All knot groups $\pi$ are finitely presented, with $H_1(\pi) \cong \Z$, $H_2(\pi) =0$, and $\pi$ normally generated by one element. When $n\geq 3$, for any group $\pi$ with these properties there exists a knot $K$ with group $\pi$.
\end{theorem}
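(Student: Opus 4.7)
The theorem has two halves: necessity of the listed properties for any knot group (valid for all $n$), and existence of a knot realising any such group (requiring $n \geq 3$). My plan for the necessity direction is a routine four-part check. Finite presentability of $\pi_K = \pi_1(X_K)$ follows because $X_K$ is a compact manifold, hence has the homotopy type of a finite CW-complex. Applying Seifert--van Kampen to the decomposition $S^{n+2} = X_K \cup \nu K$, together with $\pi_1(S^{n+2}) = 1$ and $\pi_1(\nu K) = \Z$ generated by the meridian $\mu$, shows that $\mu$ normally generates $\pi_K$. Alexander duality yields $H_1(X_K) \cong \widetilde H^{n}(S^n) = \Z$, which agrees with $H_1(\pi_K)$ by Hurewicz. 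Finally, the Hopf surjection $H_2(X_K) \twoheadrightarrow H_2(\pi_K)$, combined with $H_2(X_K) \cong \widetilde H^{n-1}(S^n) = 0$ (again by Alexander duality), forces $H_2(\pi_K) = 0$.

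For the existence direction with $n \geq 3$, the plan is to build the exterior from a presentation by explicit handle attachment. Choose a presentation $\pi = \langle \mu, x_2, \ldots, x_g \mid r_1, \ldots, r_k \rangle$ in which $\mu$ is the prescribed normal generator (one may always insert $\mu$ as a generator with a defining relation). Begin with $Y_0 = S^1 \times D^{n+1}$, the exterior of the unknot, with the $S^1$-factor representing $\mu$. Attach $g-1$ index-one handles along $\partial Y_0 = S^1 \times S^n$ for the extra generators, and then $k$ index-two handles along embedded, framed curves realising the relators $r_j$. Since $n+2 \geq 5$, general position makes all of this attaching data simultaneously embedded and disjoint. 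By Seifert--van Kampen the resulting compact $(n+2)$-manifold $Y$ has $\pi_1(Y) \cong \pi$.

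The existence argument is then completed by checking that $Y$ really is a knot exterior. For this, I would arrange that $\partial Y \cong S^n \times S^1$ (possibly by introducing cancelling higher-index handles, which is available since $n+2 \geq 5$) and glue on $S^n \times D^2$ to obtain a closed manifold $M$. Seifert--van Kampen together with the fact that $\mu$ normally generates $\pi$ yields $\pi_1(M) = \pi/\langle\langle \mu \rangle\rangle = 1$. A Mayer--Vietoris computation, using exactly the hypotheses $H_1(\pi) = \Z$ and $H_2(\pi) = 0$, shows that $M$ is an integral homology $(n+2)$-sphere. The topological Poincaré conjecture in dimension $\geq 5$ identifies $M$ with $S^{n+2}$, and the core $S^n \times \{0\}$ of the glued-in solid torus is the desired knot with group $\pi$.

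The main obstacle I expect is controlling the boundary of $Y$ through the handle construction: arbitrary 1- and 2-handle attachments will not automatically return $\partial Y$ to $S^n \times S^1$, so one must perform careful handle-trading and possibly stabilise and cancel to correct the boundary. The algebraic hypotheses on $\pi$ are precisely what make the homological obstructions vanish, so that the closed manifold $M$ ends up a homology sphere and high-dimensional Poincaré is applicable to finish the argument.
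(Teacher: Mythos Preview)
Your necessity argument is correct and standard.

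For the existence direction, the paper does not prove \fullref{thm:Kervaire} directly but reproduces Kervaire's construction in the proof of \fullref{prop:dannydouble}, and that construction differs from yours in an essential way. Kervaire does not attempt to build the knot exterior in dimension $n+2$. He works one dimension higher: starting from $W_1 = \natural^a (S^1 \times D^{n+2})$, he attaches $2$-handles along relator curves in $\partial W_1$ to realise $\pi$, then attaches $3$-handles along embedded $2$-spheres in the resulting boundary to kill $H_2$ (this is where $H_2(\pi)=0$ enters, via Hopf's theorem), arriving at $W_3$ with $\pi_1(\partial W_3)=\pi$ and $H_r(\partial W_3)=0$ for $r>1$. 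A final $2$-handle along a generator of $H_1(\partial W_3)\cong\Z$ yields a contractible $W$ with $\partial W$ a homotopy $(n+2)$-sphere, and the knot is the boundary of the cocore of that last handle. The ambient sphere arises as a \emph{closed boundary}; at no stage must one prescribe the homeomorphism type of a boundary component in advance.

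Your route, by contrast, builds the exterior $Y$ in dimension $n+2$ and must then force $\partial Y \cong S^1 \times S^n$. You correctly flag this as the main obstacle, but you do not overcome it: ``handle-trading and cancelling'' is a hope rather than an argument, and there is no mechanism offered for undoing the surgeries your $1$- and $2$-handles perform on $\partial Y_0$ without disturbing $\pi_1(Y)$. There is a second, related gap: even granting $\partial Y \cong S^1 \times S^n$, your Mayer--Vietoris claim that $M$ is a homology sphere needs control of $H_*(Y)$, not merely $H_*(\pi)$. Your $Y$ has the homotopy type of a $2$-complex, and $H_2(Y)$ can be nonzero even though $H_2(\pi)=0$ (Hopf only gives a surjection $H_2(Y)\twoheadrightarrow H_2(\pi)$); that excess $H_2$ would survive into $M$. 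Kervaire's trick of going up a dimension is exactly what allows one to kill this spherical $H_2$ with $3$-handles while keeping the eventual ambient manifold closed and leaving $\pi_1$ untouched.
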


Kervaire's construction even holds in the category of smooth knots. To work in maximal generality, we also state the next proposition in that category.

\begin{proposition}\label{prop:dannydouble} Let $n>3$ be an integer and let $\pi$ be a group satisfying the conditions in~\fullref{thm:Kervaire}. Then there exists a smooth doubly slice $n$-knot $K$ with $\pi_K=\pi$.
\end{proposition}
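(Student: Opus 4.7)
Plan. The strategy is to realise $\pi$ as the knot group of a doubly slice knot by producing, via a doubled version of Kervaire's argument, a knot $K$ together with two complementary slice discs.

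Since $\pi$ satisfies the hypotheses of~\fullref{thm:Kervaire} and $n > 3 \geq 3$, this theorem produces a smooth $n$-knot $K \subset S^{n+2}$ with $\pi_K = \pi$. The remaining task is to exhibit two complementary slice discs $D_\pm \subset D^{n+3}_\pm$ for $K$ whose union is the unknotted $(n+1)$-sphere in $S^{n+3} = D^{n+3}_+ \cup_{S^{n+2}} D^{n+3}_-$.

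I would construct, inside each hemisphere $D^{n+3}_\pm$, a slice disc $D_\pm$ for $K$ such that the slice disc exterior $W_\pm := D^{n+3}_\pm \setminus \nu D_\pm$ satisfies $\pi_1(W_\pm) \cong \Z$. This is a relative version of Kervaire's argument. Starting with a candidate $(n+3)$-dimensional cobordism from $X_K$ to the exterior of an unknot in $S^{n+2}$, one attaches $2$-handles in $W_\pm$ along null-homotopic loops representing generators of the commutator subgroup $[\pi,\pi]=\ker(\pi \to \Z)$. The condition $H_2(\pi) = 0$ is precisely what allows these $2$-handles to be attached, via the same homological computation used in~\fullref{thm:Kervaire}, and the dimensional constraint $n+3 \geq 7$ gives enough room for smooth general position embeddings. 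Further handles can be attached as needed to kill any residual higher homotopy, producing a slice disc exterior with the homotopy type of $S^1$. The two hemisphere constructions are performed identically, so they agree on the common boundary $(S^{n+2}, K)$, yielding two slice discs for the same knot $K$.

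The $(n+1)$-sphere $J := D_+ \cup_K D_- \subset S^{n+3}$ then has complement with fundamental group
\[
\pi_1(S^{n+3} \setminus \nu J) \cong \pi_1(W_+) *_{\pi_K} \pi_1(W_-) \cong \Z *_\pi \Z \cong \Z,
\]
where the pushout collapses to $\Z$ because both maps $\pi \to \Z$ are the abelianisation. Alexander duality gives the correct complement homology, so by Levine's unknotting theorem for smooth high-dimensional knots (applicable since $n+1 \geq 5$), $J$ is isotopic to the standard unknotted $(n+1)$-sphere in $S^{n+3}$. Hence $D_\pm$ are complementary slice discs for $K$, so $K$ is doubly slice, as required.

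The main obstacle is the construction of a slice disc for $K$ with $\pi_1 = \Z$ exterior in each hemisphere: a strengthening of ordinary sliceness requiring that $[\pi,\pi]$ be killed in the slice disc exterior by attached handles. The hypothesis $H_2(\pi) = 0$ is used precisely here to provide the null-homotopies needed to attach these handles, and the construction relies on the availability of smooth high-dimensional surgery, which is why the statement requires $n > 3$ rather than merely $n \geq 3$.
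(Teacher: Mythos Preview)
Your argument has a genuine gap at the unknotting step. Levine's unknotting theorem requires the complement of $J$ to have the full homotopy type of $S^1$; having $\pi_1\cong\Z$ together with the integral homology of $S^1$ (which Alexander duality gives for \emph{any} knot) is not sufficient. Even granting that you can build slice disc exteriors $W_\pm\simeq S^1$, the decomposition $S^{n+3}\setminus\nu J=W_+\cup_{X_K}W_-$ and Mayer--Vietoris with $\Z[\Z]$ coefficients yield
\[
H_i\big(S^{n+3}\setminus\nu J;\,\Z[\Z]\big)\;\cong\;H_{i-1}\big(X_K;\,\Z[\Z]\big)\qquad(i\geq 2),
\]
since $\widetilde H_*(W_\pm;\Z[\Z])=0$. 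In particular $H_2(S^{n+3}\setminus\nu J;\Z[\Z])\cong H_1(X_K;\Z[\Z])\cong\pi^{(1)}/\pi^{(2)}$, the Alexander module of $K$; this is nonzero, for instance, whenever $\pi$ is metabelian with $\pi\neq\Z$, which includes all the groups used later in the paper. So the complement of $J$ is not homotopy equivalent to $S^1$, $J$ is genuinely knotted, and the discs $D_\pm$ are not complementary. (A smaller issue: ``null-homotopic loops representing generators of $[\pi,\pi]$'' is self-contradictory as written, and it is not clear that the Kervaire knot for an arbitrary $\pi$ admits any slice disc whose exterior is homotopy equivalent to $S^1$.)

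The paper avoids this by reversing the order of construction. Rather than starting from a knot and seeking complementary slice discs, it carries out Kervaire's handle argument to produce a contractible $(n+2)$-manifold $W$ containing a properly embedded disc $\Delta\colon D^n\hookrightarrow W$ with $\pi_1(W\setminus\Delta)\cong\pi$, and then takes the \emph{doubled} knot $K=-\Delta\cup_{\partial\Delta}\Delta\subset -W\cup_{\partial W}W$, corrected to lie in the standard $S^{n+2}$ by connect-summing with the appropriate exotic sphere. Seifert--van Kampen gives $\pi_K\cong\pi*_\pi\pi\cong\pi$, and Levine's theorem that doubled disc knots are always doubly slice \cite[Theorem~B]{Levine-doubly-slice} finishes the proof directly, with no unknotting argument required.
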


The additional fact that for $n>3$, $K$ may be taken to be doubly slice was observed by Ruberman \cite[Proof of Proposition~3.1]{Ruberman-doubly-slice-II}, but he did not give a proof, so we offer one here. Our proof will follow that of \cite[Th\'{e}or\`{e}me I.1]{Kervaire:1965-1}, with necessary modifications to obtain the desired result.

\begin{proof}
Let $\pi=\langle x_1,\dots,x_a \mid r_1,\dots,r_b\rangle$ be a group presentation of $\pi$, where $H_1(\pi) \cong \Z$, $H_2(\pi) =0$, and $\pi$ is normally generated by one element. Write $W_1$ for the boundary connected sum of $a$ copies of $S^1\times D^{n+1}$. Fix an isomorphism $\pi_1(W_1)\cong\langle x_1,\dots,x_a\rangle$ by sending the homotopy class of $S^1\times \pt$ in the $i$th copy of $S^1\times D^{n+1}$ to $x_i$. Note this also determines an isomorphism $\pi_1(\partial W_1)\cong\langle x_1,\dots,x_a\rangle$, and using this, choose $b$ disjoint embedded closed curves in $\partial W_1$ representing the words $r_j$ in the generators. As $\partial W_1$ is orientable, we may extend the embedded curves to disjointly embedded copies of $S^1\times D^{n}$. Use these tubular neighbourhoods as attaching regions and attach $2$-handles to $W_1$, to obtain $W_2$. Note that $\pi_1(W_2)\cong\pi$. By a theorem of Hopf, there is an exact sequence, $\pi_2(\partial W_2)\to H_2(\partial W_2;\Z)\to H_2(\pi)\to 0$. By hypothesis, $H_2(\pi)=0$, and hence any element of $H_2(\partial W_2;\Z)$ may be represented by a continuous map $S^2\to \partial W_2$. As the dimension of $\partial W_2$ is $n+2>4$, Whitney's embedding theorem means we may assume the elements of $H_2(\partial W_2;\Z)$ are represented by disjointly embedded $2$-spheres. Since the manifold $\partial W_2$ is stably parallelisable (being homotopy equivalent to a 2-complex), we may extend the 2-sphere embeddings to disjoint embeddings of copies of $S^2\times D^{n-1}$. Use these tubular neighbourhoods as attaching regions and attach $3$-handles to $W_2$, to obtain $W_3$. Note that $\pi_1(\partial W_3)=\pi_1(W_3)=\pi$ and that $H_r(\partial W_3)=0$ for $r>1$. We refer to the proof of~\cite[Th\'{e}or\`{e}me I.1]{Kervaire:1965-1} for the details.

Attach a $2$-handle to $W_3$ along an embedded $S^1\times D^{n}\hookrightarrow \partial W_3$ that represents a generator of $H_1(\partial W_3;\Z)\cong\Z$. This results in a contractible $(n+2)$-manifold $W$ with boundary a homotopy $(n+1)$-sphere. Write $\Delta\colon D^n\hookrightarrow W$ for the cocore of the final 2-handle attachment and $\Delta |_{\partial D^{n}}=J\colon S^{n-1}\hookrightarrow \partial W$. We note that $\pi_1(\partial W\sm J(S^{n-1}))=\pi_1(\partial W_3)=\pi$ and the inclusion $\partial W\sm J(S^{n-1})\subset W\sm \Delta(D^{n})$ induces an isomorphism on $\pi_1$. Define a locally flat embedding of $S^{n}$ by $-\Delta\cup_J\Delta\colon S^n\hookrightarrow -W\cup_{\partial W} W$.
Note that the closed $(n+2)$-manifold $-W\cup_{\partial W} W=:M$ is homotopy equivalent to $S^{n+2}$. For  $n>2$, the set of smooth closed homotopy $(n+2)$-spheres is a finite abelian group. Let $N$ be the group inverse of $M$, so that $N\#M$ is diffeomorphic to $S^{n+2}$, and $K:=\im(-\Delta\cup_J\Delta)\subset N\# M$ is a smooth $n$-knot.
By~\cite[Theorem B]{Levine-doubly-slice}, doubled disc knots are doubly slice, so $K$ is doubly slice. By the Seifert-Van-Kampen theorem, $\pi_K=\pi *_{\pi} \pi \cong \pi$.
\end{proof}

\noindent We record a well known fact about knot groups.

\begin{proposition}\label{prop:metabelian}
If a knot $K$ has metabelian fundamental group then $\pi_K\cong \Z\ltimes H_1(X_K;\Z[\Z])$, where the splitting depends on a choice of homomorphism $\Z \to \pi_K$, corresponding to a choice of oriented meridian.
\end{proposition}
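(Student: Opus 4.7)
The plan is to unpack the proposition from the derived series of $\pi_K$ together with a standard identification of the first derived subquotient with the Alexander module. First, by Alexander duality (as recorded earlier in the excerpt, $X_K$ has the homology of $S^1$), the abelianisation $\pi_K/\pi_K^{(1)}$ is infinite cyclic. This yields a short exact sequence
\[
1 \to \pi_K^{(1)} \to \pi_K \to \Z \to 1.
\]
Since $\Z$ is free, any group homomorphism $\Z\to\pi_K$ lifting the identity on the abelianisation gives a splitting, and such a homomorphism is exactly the data of a choice of oriented meridian $\mu\in\pi_K$. Consequently $\pi_K\cong \Z\ltimes \pi_K^{(1)}$, with the splitting depending on $\mu$.

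The next step is to identify $\pi_K^{(1)}$ with $H_1(X_K;\Z[\Z])$. The infinite cyclic cover $\ol X_K\to X_K$ is classified by the abelianisation, so by covering space theory $\pi_1(\ol X_K)\cong \pi_K^{(1)}$. Under the metabelian hypothesis $\pi_K^{(2)}=1$, the group $\pi_K^{(1)}$ is already abelian and equals its own abelianisation, so
\[
\pi_K^{(1)} \cong \pi_1(\ol X_K)^{\mathrm{ab}} \cong H_1(\ol X_K;\Z) \cong H_1(X_K;\Z[\Z]),
\]
where the final isomorphism is the usual identification of homology with $\Z[\Z]$-coefficients with the integral homology of the infinite cyclic cover. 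Combining with the splitting from the previous step gives the claimed isomorphism.

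I do not anticipate a substantive obstacle: the proposition is labelled well known, and every step is either an invocation of Alexander duality, freeness of $\Z$, or the dictionary between connected covers of $X_K$ and subgroups of $\pi_K$. The only point worth articulating carefully is that the $\Z$-action on $H_1(X_K;\Z[\Z])$ built into the semidirect product is precisely the standard $\Z[\Z]$-module structure, namely the one induced by the deck transformation corresponding to the chosen meridian $\mu$; this is immediate from the way a splitting of the short exact sequence above converts to a conjugation action of $\Z$ on $\pi_K^{(1)}$, and it confirms that different meridian choices produce the expected compatible splittings.
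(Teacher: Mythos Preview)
Your proof is correct and follows essentially the same approach as the paper: both use the short exact sequence with quotient $\Z$, split it via a choice of meridian (since $\Z$ is free), and identify the abelian kernel $\pi_K^{(1)}=\pi_K^{(1)}/\pi_K^{(2)}$ with $H_1(X_K;\Z[\Z])$. The only cosmetic difference is that the paper first writes the sequence for $G/G^{(2)}$ in general and then specializes, whereas you invoke the metabelian hypothesis from the outset and spell out the covering-space identification more explicitly.
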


\begin{proof}
For any group $G$, there is then an exact sequence
\[
0\to G^{(1)}/G^{(2)}\to G/G^{(2)}\to G/G^{(1)}\to 0.
\]
Now let $X$ be a topological space with $\pi_1(X)/\pi_1(X)^{(1)}\cong H_1(X;\Z)\cong \Z$. We see that the abelianisation of the commutator subgroup of $\pi_K$ is $\pi_1(X)^{(1)}/\pi_1(X)^{(2)}\cong H_1(X;{\Z[\Z]})$, with ${\Z[\Z]}$-coefficients determined by the abelianisation of $\pi_1(X)$. Setting $G=\pi_1(X)$, the sequence above splits. This gives
\[
\pi_1(X)/\pi_1(X)^{(2)}\cong \Z\ltimes H_1(X;{\Z[\Z]}).
\]
In particular, suppose $K$ is an $n$-knot such that $\pi_K \cong \pi_K/\pi_K^{(2)}$ is metabelian.  Let $H:= \pi_K^{(1)}/\pi_K^{(2)} = \pi_K^{(1)} = H_1(X_K;{\Z[\Z]})$.
Make a choice of oriented meridian $\mu\subset S^{n+2}$ for $K$. This determines a homomorphism $\Z = \pi_K/\pi_K^{(1)}\to \pi_K$ by $1\mapsto [\mu]$. This homomorphism splits the sequence
\[
0 \to H \to \pi_K \to \Z \to 0,
\]
on the right, resulting in an isomorphism $\pi_K \cong \Z \ltimes H$.
\end{proof}

The next result provides examples of metabelian knot groups for high-dimensional knots.

\begin{proposition}\label{lem:buildtheknot} Let $\mathcal{A}$ be a finitely generated ${\Z[\Z]}$-module such that multiplication by~$t-1$ acts as an automorphism on $\mathcal{A}$. For all $n\geq 3$, there is an $n$-knot $K$ such that the group of $K$ is the metabelian group $\Z \ltimes \mathcal{A}$, and for $n>3$ this knot may be chosen to be doubly slice.
\end{proposition}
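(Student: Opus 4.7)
The plan is to apply \fullref{prop:dannydouble} (which supplies a doubly slice knot for $n > 3$) and \fullref{thm:Kervaire} (which supplies a knot for $n \geq 3$) to the group $\pi := \Z \ltimes \mathcal{A}$, where the $\Z$-action on $\mathcal{A}$ is the given $\Z[\Z]$-module action. The task reduces to verifying four conditions: $\pi$ is finitely presented, $H_1(\pi) \cong \Z$, $H_2(\pi) = 0$, and $\pi$ is normally generated by one element.

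Three of these follow quickly from the hypothesis that $t-1$ acts as an automorphism on $\mathcal{A}$. Since $\Z[\Z]$ is Noetherian, the finitely generated module $\mathcal{A}$ is also finitely presented, and a finite presentation matrix translates in the standard way to a finite group presentation for $\pi$ with generators $t$ together with module generators of $\mathcal{A}$. For $H_1(\pi)$, observe that every $a \in \mathcal{A}$ equals a commutator $[t, a'] = (t-1)a'$ because $t-1$ is surjective, so the abelianisation is just $\Z$. For normal generation by $t$, in $\pi / \langle\langle t\rangle\rangle$ the image of $\mathcal{A}$ is $\mathcal{A}/(t-1)\mathcal{A} = 0$, so the entire group collapses.

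The main step is to verify $H_2(\pi) = 0$. I would use the Lyndon--Hochschild--Serre spectral sequence for the extension $1 \to \mathcal{A} \to \pi \to \Z \to 1$, with $E^2_{p,q} = H_p(\Z; H_q(\mathcal{A}; \Z))$ converging to $H_{p+q}(\pi)$. Since $\Z$ has cohomological dimension one, only the columns $p = 0, 1$ are nonzero; and by Hopf's formula $H_2(\mathcal{A}; \Z) \cong \Lambda^2 \mathcal{A}$. Thus the only terms contributing to $H_2(\pi)$ are
\[
E^2_{1,1} = H_1(\Z; \mathcal{A}) = \ker(t-1 \colon \mathcal{A} \to \mathcal{A})
\quad \text{and} \quad
E^2_{0,2} = H_0(\Z; \Lambda^2 \mathcal{A}) = \coker\bigl(\Lambda^2(t) - \Id\bigr),
\]
and no nonzero incoming differentials reach $E^r_{0,2}$ because the source terms $E^r_{r, 3-r}$ are zero for all $r \geq 2$. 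The first of these vanishes by injectivity of $t-1$. The second reduces the proof to showing that $\Lambda^2(t) - \Id$ is surjective on $\Lambda^2 \mathcal{A}$, which I would establish using the decomposition $(\Lambda^2(t) - \Id)(a \wedge b) = (t-1)a \wedge tb + a \wedge (t-1)b$ together with the invertibility of $t-1$; this algebraic step is the piece I expect to be the main technical difficulty.

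Once the four hypotheses are verified, \fullref{thm:Kervaire} produces an $n$-knot $K$ with $\pi_K \cong \pi$ for every $n \geq 3$, and for $n > 3$, \fullref{prop:dannydouble} upgrades the construction to give such a $K$ that is doubly slice. The meridional generator $t$ then provides the splitting $\pi_K \cong \Z \ltimes H_1(X_K; \Z[\Z])$ predicted by \fullref{prop:metabelian}, with $H_1(X_K; \Z[\Z]) \cong \mathcal{A}$ as required.
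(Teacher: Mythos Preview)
Your approach is essentially the same as the paper's: verify Kervaire's conditions for $\pi=\Z\ltimes\mathcal{A}$, handling $H_1$ and $H_2$ via the Wang sequence (your two-column Lyndon--Hochschild--Serre spectral sequence collapses to exactly this). The reductions you make are correct, and you have isolated the crux precisely: one needs $\Lambda^2 t-\Id$ to be surjective on $H_2(\mathcal{A};\Z)\cong\Lambda^2\mathcal{A}$.

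Unfortunately this crux is not merely a technical difficulty but a genuine obstruction: the surjectivity fails already for $\mathcal{A}=\Z[t,t^{-1}]/(t^2-t+1)$. Here $t-1$ is an automorphism of $\mathcal{A}\cong\Z^2$ (its determinant is $\Phi_6(1)=1$), yet $\Lambda^2\mathcal{A}\cong\Z$ with $\Lambda^2 t=\det(t)=1$, so $\Lambda^2 t-\Id=0$ and $H_2(\Z\ltimes\mathcal{A})\cong\Z$. Geometrically, $\Z\ltimes\mathcal{A}$ is the fundamental group of an orientable $T^2$-bundle over $S^1$, a closed aspherical $3$-manifold, so $H_2\cong H^1\cong\Z$ and Kervaire's condition $H_2=0$ cannot hold. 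Your identity $(\Lambda^2 t-\Id)(a\wedge b)=(t-1)a\wedge tb+a\wedge(t-1)b$ is correct but does not yield surjectivity: in this example the right-hand side is a relation summing to zero. The paper's proof suffers the same defect---its assertion that ``since $t-1$ acts as an automorphism of $\mathcal{A}$, it certainly does on the homology of $\mathcal{A}$'' conflates the functorially induced map $(t-1)_*$ with the Wang differential $t_*-\Id$, which are different endomorphisms of $H_i(\mathcal{A})$. Indeed for any $\Phi_m$ with $m>2$ the roots come in conjugate pairs $\zeta,\bar\zeta$ with $\zeta\bar\zeta=1$, so $1$ is always an eigenvalue of $\Lambda^2 t$ and the cokernel is nonzero. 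The proposition as stated therefore appears to be false, and neither your argument nor the paper's can be completed without an additional hypothesis on~$\mathcal{A}$.
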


\begin{proof}By \fullref{thm:Kervaire} and \fullref{prop:dannydouble}, it is sufficient to check that $\Z\ltimes\mathcal{A}$ satisfies Kervaire's conditions. The group $\Z \ltimes \mathcal{A}$ is normally generated by one element because $$(0,-h)(1,0)(0,h) = (1,(t-1)h),$$ and for any $h'\in \mathcal{A}$ there is an $h$ such that $(t-1)h=h'$.  Thus $(0,-h)(1,0)(0,h)(n-1,0) = (n,h')$, so $(n,h')$ can be realised as a product of conjugates of $(1,0)$.  Since~$t-1$ acts as an automorphism of $\mathcal{A}$, it certainly does on the homology of $\mathcal{A}$,~$t-1 = \Id \colon H_i(\mathcal{A};\Z) \to H_i(\mathcal{A};\Z)$, for $i > 0$.  For $i=0$, multiplication by $t-1$ induces the zero map.  Then the Wang sequence
  \begin{align*} & H_2(\mathcal{A}) \xrightarrow{t-1 =\Id} H_2(\mathcal{A}) \to H_2(\Z
  \ltimes \mathcal{A}) \\ \to &H_1(\mathcal{A}) \xrightarrow{t-1 =\Id} H_1(\mathcal{A}) \to H_1(\Z
  \ltimes \mathcal{A}) \\ \to & H_0(\mathcal{A}) \cong \Z \xrightarrow{t-1 =0} H_0(\mathcal{A}) \cong \Z \end{align*}
computes that the group homology $H_2(\Z\ltimes \mathcal{A}) = 0$ and $H_1(\Z\ltimes \mathcal{A}) \cong \Z$.
\end{proof}

\begin{example}
Let $p(t)\in\Z[t,t^{-1}]$ such that $p(1)=\pm 1$. Then $1-t$ acts on the module $\mathcal{A}=\Z[t,t^{-1}]/p(t)$ by automorphism. Moreover, $\mathcal{A}$ is finitely generated as an abelian group. This provides a good source of modules for \fullref{lem:buildtheknot}.
\end{example}

\begin{remark}
When $n=1$, the only knot with a metabelian group is the unknot. To see this, consider that when a $1$-knot has nontrivial Alexander polynomial, the longitude of the knot always lies in the group $\pi_K^{(2)}$, so by the loop theorem this group is non-vanishing and $\pi_K$ is not metabelian. Now if $K$ has trivial Alexander polynomial then $H_1(\ol{X}_K;\Z)=0$, so that $\pi_K/\pi_K^{(2)}\cong\Z\ltimes\{e\}$. If $\pi_K$ is also metabelian then $\pi_K\cong\Z$, meaning $K$ is the unknot. Our main doubly slice obstruction (\fullref{thm:main}) requires a metabelian fundamental group, so is not interesting for $1$-knots.
\end{remark}

\subsection{Strebel's class}

\noindent We collect some additional group theoretic definitions and result which we will use later.

\begin{definition}A group $G$ is \emph{poly-torsion-free-abelian (PTFA)} if there exists a subnormal series\[G=G_0\geq G_1\geq G_2\geq\dots\geq G_n=\{e\},\]such that each subquotient $G_i/G_{i+1}$ is torsion free abelian.
\end{definition}

\begin{definition}
A group $G$ is \emph{residually finite} if there exists a descending chain of normal subgroups $G\supset G_1\supset G_2\supset \dots$, each of finite index $|G/G_i| < \infty$, and such that $\bigcap_i G_i=\{e\}$
\end{definition}

\noindent
We recall a definition due to Strebel \cite[\textsection 1.1]{Strebel:1974-1}.

\begin{definition}Let $R$ be a ring with unit $1\neq 0$ and involution. Suppose for a group~$G$ that given any homomorphism $\theta\colon P\to Q$ of projective left~$RG$-modules such that the homomorphism $1_R\otimes_{RG}\theta\colon R\otimes_{RG} P\to R\otimes_{RG}Q$ is injective, we have that $\theta$ is injective. Then we say $G$ belongs to the \emph{Strebel class $D(R)$}.
\end{definition}

\noindent
The following is a consequence of results in \cite{Strebel:1974-1}.

\begin{lemma}\label{lem:strebelclass}
For all $R$, any PTFA group is in $D(R)$.
\end{lemma}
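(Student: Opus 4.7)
The plan is to apply the closure theorems for the Strebel class $D(R)$ established in~\cite{Strebel:1974-1}. The three facts I will use from Strebel are: (a) the infinite cyclic group $\Z$ lies in $D(R)$; (b) $D(R)$ is closed under group extensions, i.e.\ if $1 \to N \to G \to Q \to 1$ is exact with $N, Q \in D(R)$ then $G \in D(R)$; and (c) $D(R)$ is closed under directed unions of subgroups.

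Given a PTFA group $G$ with subnormal series
\[
G = G_0 \geq G_1 \geq \cdots \geq G_n = \{e\},
\]
where each subquotient $G_i/G_{i+1}$ is torsion-free abelian, I would argue by induction on the length $n$. For the base case ($n=1$) the group $G$ is torsion-free abelian, hence the directed union of its finitely generated subgroups, each of which is isomorphic to some $\Z^k$. By (c) it suffices to check $\Z^k \in D(R)$, and iterating (b) on the short exact sequences $1 \to \Z^{k-1} \to \Z^k \to \Z \to 1$ reduces this to $\Z \in D(R)$, which is (a). For the inductive step, $G_1$ is PTFA with a shorter subnormal series so lies in $D(R)$ by induction, while $G/G_1$ is torsion-free abelian so lies in $D(R)$ by the base case; applying (b) to the extension $1 \to G_1 \to G \to G/G_1 \to 1$ gives $G \in D(R)$.

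The main obstacle is not the inductive scheme itself but the verification of (a) and (b), both of which are the substance of~\cite{Strebel:1974-1}. Fact (a) relies on $R[\Z]$ being a (skew) Laurent polynomial ring over~$R$, so that base change along the augmentation $t \mapsto 1$ is well-behaved enough to detect injectivity of maps of projective modules. Fact (b) is the more delicate ingredient and proceeds by relating $RG$-projectives to $RN$- and $RQ$-projectives via the freeness of $RG$ over $RN$. Granting those two statements from Strebel's paper, the lemma assembles from the induction above with no further work.
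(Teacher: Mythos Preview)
Your proposal is correct and follows essentially the same strategy as the paper: reduce to the statement $\Z \in D(R)$ by invoking Strebel's closure properties for $D(R)$, then cite Strebel for that base case. The only difference is in the torsion-free abelian step: the paper appeals to closure under products (writing $G \cong \Pi_i \Z$ and citing \cite[Proposition~1.5(iii)]{Strebel:1974-1}), whereas you use closure under directed unions of the finitely generated subgroups $\Z^k$; your route is arguably more careful, since an arbitrary torsion-free abelian group need not literally be a product of copies of~$\Z$.
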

\begin{proof}By \cite[Proposition 1.5(ii)]{Strebel:1974-1}, it is enough to show that for all $R$ any torsion free abelian group $G\cong\Pi_i\Z$ is in $D(R)$. If for some $R$ and all $i\in I$, some index set, groups $G_i$ are in $D(R)$, then $\Pi_iG_i$ is in $D(R)$ \cite[Proposition 1.5(iii)]{Strebel:1974-1}. So it is enough to show that, for all $R$, $\Z$ is in $D(R)$. This is shown in \cite[Proposition 1.3]{Strebel:1974-1}.
\end{proof}

\section{Metabelian $L^{(2)}$ $\rho$-invariants for $(4\ell-1)$-manifolds}

Suppose $(M,\phi)$ consists of a closed, oriented, topological $(4\ell-1)$-manifold $M$ together with a map $\varphi\colon M\to BG$, where $G$ is a discrete group. We now recall how to obtain a topological invariant $\rho^{(2)}(M,\varphi)\in \R$ using an $L^{(2)}$-signature defect.

We recall some definitions and features of von Neumann algebras; see \cite[Chapter 1]{Lueck:2002-1} for a more complete account. Let $\Gamma$ be a countable discrete group and let $\ell^{2}\Gamma$ be the Hilbert space of square summable formal sums of group elements with complex coefficients. For a Hilbert space $H$, denote by $\mathcal{B}(H)$ the $C^*$-algebra of bounded linear operators from $H$ to itself. The \emph{group von Neumann algebra} of $\Gamma$ is a $\C$-algebra with involution $\mathcal{N}\Gamma$, consisting of the $\Gamma$-equivariant elements of $\mathcal{B}(\ell^{2}\Gamma)$. By convention, an element of $\C\Gamma$ determines a linear operator on $\ell^{2}\Gamma$ by left multiplication, and in this way there are inclusions of $\C$-algebras with involution $\C\Gamma\subseteq \mathcal{N}\Gamma\subseteq \mathcal{B}(\ell^{2}\Gamma)$.

Any projective $\mathcal{N}\Gamma$-module $P$ has an associated \emph{$\mathcal{N}\Gamma$-dimension} $\dim_{\mathcal{N}\Gamma}P\in[0,\infty)$; see~\cite[Chapter 6]{Lueck:2002-1}. A finitely generated projective $\mathcal{N}\Gamma$-module $P$ with an $\mathcal{N}\Gamma$-module homomorphism $\lambda\colon P\to P^*=\Hom_{\mathcal{N}\Gamma}(P,\mathcal{N}\Gamma)$ comprise a \emph{hermitian form} $(P,\lambda)$ if $\lambda(x)(y)=\ol{\lambda(y)(x)}$ for all $x,y\in P$. A hermitian form is \emph{nonsingular} if $\lambda$ has the property $\dim_{\mathcal{N}\Gamma}\ker(\lambda)=0=\dim_{\mathcal{N}\Gamma}\coker(\lambda)$. Using the $\mathcal{N}\Gamma$-dimension function it is then possible to define an \emph{$L^{(2)}$-signature} homomorphism from the Witt group $\sgn^{(2)}\colon L^0_p(\mathcal{N}\Gamma)\to \R$; see \cite[Section 3.1]{MR3257550}. When $P$ is moreover free, of rank $n$ (say), a hermitian form $(P,\lambda)$ may be described by some $U\in \Herm_{n\times n}(\mathcal{N}\Gamma)$, and the $L^{(2)}$-signature is given by
\[
\sgn^{(2)}(U)=\tr_\Gamma(p_+(U))-p_-(U))\in\R.
\]
Here $\tr_\Gamma\colon K_0(\mathcal{N}\Gamma)\to \C$ is the von Neumann trace of $\mathcal{N}\Gamma$ \cite[Definition 1.2]{Lueck:2002-1}. The operators $p_+,p_-\colon\Herm_{n\times n}(\mathcal{N}\Gamma)\to K_0(\mathcal{N}\Gamma)$ are defined by taking the characteristic functions of $(0,\infty)$ and $(-\infty,0)$ respectively, considering these as operators on the spectrum of a hermitian matrix, then applying the functional calculus; see~\cite[Definition 5.2]{MR1973052}, and the preceding discussion there.

\smallskip

Suppose there exists a connected, oriented, compact, topological $4\ell$-manifold $W$ cobounding~$r$ disjoint copies of $M$. Suppose further that there is an injective group homomorphism $j\colon G\hookrightarrow \Gamma$, where $\Gamma$ is also a discrete group, and a map $\psi$ making the following diagram of maps of spaces commute
\begin{equation}\label{eq:diagram}
\begin{tikzcd}
\bigsqcup_r M\ar{rr}{\bigsqcup_r\varphi}\ar{d}{i} && BG\ar{d}{Bj}\\
W\ar{rr}{\psi} && B\Gamma
\end{tikzcd}
\end{equation}

Using the inclusions $\Z\Gamma\subset \C\Gamma\subseteq\mathcal{N}\Gamma$, the representation $\psi$, and a cellular chain complex for $W$, define a chain complex $C(W;\mathcal{N}\Gamma):=\mathcal{N}\Gamma\otimes_{\Z\Gamma}C(W;\Z\Gamma)$ of free, finitely generated left modules over $\mathcal{N}\Gamma$, and from this define homology $\mathcal{N}\Gamma$-modules~$H_*(W;\mathcal{N}\Gamma)$. There is a $\mathcal{N}\Gamma$-coefficient hermitian intersection form\[\lambda\colon H_{2l}(W;\mathcal{N}\Gamma)\times H_{2l}(W;\mathcal{N}\Gamma)\to \mathcal{N}\Gamma.\]The form $(H_{2l}(W;\mathcal{N}\Gamma),\lambda)$ becomes a nonsingular form on a projective $\mathcal{N}\Gamma$-module after applying the $\mathcal{N}\Gamma$ projectivisation functor $\mathbf{P}$ (see \cite[\textsection 3.1]{MR3257550}), so that we may define $\sgn ^{(2)}(W,\psi):=\sgn^{(2)}(\mathbf{P}(H_{2l}(W;\mathcal{N}\Gamma)),\mathbf{P}(\lambda))\in\R$.

\begin{definition}\label{def:rho}Given $(M,\phi)$ as above and for any $(W,\Gamma,j,\psi)$ as above, the \emph{$L^{(2)}$ $\rho$-invariant} of $(M,\varphi)$ is defined to be
\begin{equation}\label{eq:rho}
\rho^{(2)}(M,\varphi):= \frac{1}{r}\left(\sgn ^{(2)}(W,\psi)-\sgn (W)\right)\in\R,
\end{equation}
where $\sgn (W)$ is the ordinary signature of $W$.
\end{definition}

Given $\varphi\colon M \to BG$, the required group homomorphism $j\colon G\hookrightarrow \Gamma$ and pair~$(W,\psi)$ always exist. Moreover the resulting $\rho^{(2)}(M,\varphi)\in\R$ is well-defined, independent of the choice $(W,\Gamma, j, \psi)$. We refer to \cite[\textsection 2.1]{MR3493628} for proof of both these facts (the discussion in the next remark recalls the development of Cha's proof).

\begin{remark} It is a result of Hausmann \cite[Theorem 5.1]{MR627098} that a closed, connected, oriented, topological manifold that bounds, must in fact bound some $W$ such that the inclusion induced map $\pi_1(M)\to \pi_1(W)$ is an injection. Chang and Weinberger \cite{MR1988288} used this result, together with Novikov additivity and $L^{(2)}$-induction, to show $\rho^{(2)}(M,\phi)$ exists and is well-defined in the case $\phi$ is the identity map for $\pi_1(M)$. In their paper, they also provided a different argument for Hausmann's result. Cha \cite[\textsection 2.1]{MR3493628} observed that this new Chang-Weinberger argument generalises to arbitrary $\phi$ and used this to prove~$\rho^{(2)}(M,\phi)$ exists and is well-defined in general.
\end{remark}

\subsection{Metabelian doubly slice obstruction from $L^{(2)}$ $\rho$-invariants}

For each prime~$p$, write~$\F_p$ for the field with~$p$ elements.

\begin{definition}Let $\mathcal{A}$ be a finitely generated left ${\Z[\Z]}$-module and let $\phi\colon G\to \Z\ltimes \mathcal{A}$ be a group homomorphism. Write
\[\phi^\Q\colon G\xrightarrow{\phi}\Z\ltimes\mathcal{A}\to \Z\ltimes\left(\Q[\Z]\otimes_{\Z[\Z]}\mathcal{A}\right),\]and for each prime $p\in\N$ write
\[\phi^{\F_p}\colon G\xrightarrow{\phi}\Z\ltimes\mathcal{A}\to \Z\ltimes\left(\F_p[\Z]\otimes_{\Z[\Z]}\mathcal{A}\right)\]for the homomorphisms given by post-composing $\phi$ with the respective tensor products. Define a set of representations \[\mathcal{H(\phi)}:=\left\{\phi^\Q,\phi^{\F_p}\mid \text{$p\in\N$ a prime}\right\}.\]
\end{definition}

\noindent The following is our main doubly slice obstruction.

\begin{theorem}\label{thm:main}Let $K$ be a $(4\ell-3)$-knot with metabelian group $\pi_K\cong \Z\ltimes H$. If~$K$ is doubly slice then then there exists a decomposition of ${\Z[\Z]}$-modules $H\cong A\oplus B$ such that for the homomorphisms
\begin{align*}
  \phi_A \colon \pi_K &\to \Z\ltimes A,\\
\phi_B\colon \pi_K &\to \Z\ltimes B,
\end{align*}
corresponding to the projections of $H$ to $A$ and $B$ respectively, we have that\[\rho^{(2)}(M_K,\phi)=0,\qquad\text{for all $\phi\in\mathcal{H}(\phi_A)\cup\mathcal{H}(\phi_B)$.}\]
\end{theorem}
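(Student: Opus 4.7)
The plan is to mimic, in high dimensions, the strategy of Kim~\cite{Kim:2006-1}: exhibit, for each representation $\phi$ in $\mathcal{H}(\phi_A) \cup \mathcal{H}(\phi_B)$, a $4\ell$-dimensional cobounding manifold for $M_K$ over which $\phi$ extends, and then invoke an amenable signature vanishing theorem.

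First, by the doubly slice hypothesis, choose complementary slice discs $D_\pm \subset D^{n+3}$ for $K$ and set $W_\pm := D^{n+3} \sm \nu D_\pm$. Each $W_\pm$ is a compact oriented $4\ell$-manifold, and after the standard identification of the meridional boundary piece $D^{n+1} \times S^1$ with the solid torus filling the surgery, one has $\partial W_\pm = M_K$. The long exact sequence of the pair $(D^{n+3}, W_\pm)$ together with the Thom isomorphism for $\nu D_\pm$ yields $H_*(W_\pm; \Z) \cong H_*(S^1; \Z)$; in particular $H_{2\ell}(W_\pm; \Z) = 0$, so $\sgn(W_\pm) = 0$. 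The Lagrangian decomposition of $H$ now comes from $P_\pm := \ker(H \to H_1(W_\pm; \Z[\Z]))$. Each $P_\pm$ is a Lagrangian of the Blanchfield form, and by the complementarity of the slice discs in $S^{n+3}$ (as in Sumners~\cite{MR0290351}) one obtains $H = P_+ \oplus P_-$; set $A := P_+$ and $B := P_-$.

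Next, I would extend each $\phi \in \mathcal{H}(\phi_A)$ across $W_-$ (and symmetrically each $\phi \in \mathcal{H}(\phi_B)$ across $W_+$). For $\phi$ with target $\Z \ltimes (R[\Z] \otimes_{\Z[\Z]} A)$ where $R \in \{\Q, \F_p\}$, the Lagrangian property combined with the appropriate change of coefficients identifies $R[\Z] \otimes_{\Z[\Z]} A$ with $H_1(W_-; R[\Z])$, producing the extension
\[
\tilde\phi \colon \pi_1(W_-) \twoheadrightarrow \pi_1(W_-)/\pi_1(W_-)^{(2)} \cong \Z \ltimes H_1(W_-; \Z[\Z]) \to \Z \ltimes (R[\Z] \otimes_{\Z[\Z]} A).
\]
Each target is amenable (solvable of derived length two); for $R = \Q$ the abelian kernel is torsion-free, making the target PTFA and hence in $D(R')$ for all rings $R'$ by \fullref{lem:strebelclass}, and for $R = \F_p$ the abelian kernel is an $\F_p$-vector space, placing the target in Strebel's class $D(\F_p)$ by a mild adaptation of that lemma. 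Applying Cha's amenable signature theorem~\cite{MR3257550, MR3493628} to the pair $(W_-, \tilde\phi)$ then yields $\sgn^{(2)}(W_-, \tilde\phi) = \sgn(W_-) = 0$, and hence $\rho^{(2)}(M_K, \phi) = 0$.

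The main obstacle I anticipate is the extension step for the finite-characteristic representations. Since the change-of-coefficients map $\Z[\Z] \to \F_p[\Z]$ is not flat, the $\Z[\Z]$-Lagrangian decomposition $H = A \oplus B$ does not formally yield the analogous surjection $\F_p[\Z] \otimes_{\Z[\Z]} H \twoheadrightarrow H_1(W_-; \F_p[\Z])$ with the expected kernel. Verifying this will require a direct chain-level analysis of the slice disc exterior with $\F_p[\Z]$-coefficients, using the slice disc analogue of the Blanchfield pairing and careful bookkeeping of the universal coefficient spectral sequence for the non-flat base change.
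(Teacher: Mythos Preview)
Your proposal is correct and follows essentially the same route as the paper: slice disc exteriors $W_\pm$ as cobounding $4\ell$-manifolds for $M_K$, the $\Z[\Z]$-module splitting $H=A\oplus B$ coming from Mayer--Vietoris for the unknot exterior $W_+\cup_{X_K}W_-$, extension of the metabelian representation over the complementary exterior, and vanishing of both the ordinary and $L^{(2)}$ signatures via the Cha--Orr amenable theorem applied to the pair $(W_\mp,S^1)$.

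The obstacle you anticipate is illusory. You do not need to identify $R[\Z]\otimes_{\Z[\Z]}A$ with $H_1(W_-;R[\Z])$ at all. The extension is built entirely at the $\Z[\Z]$-level: from the Mayer--Vietoris splitting one already has a $\Z[\Z]$-module isomorphism $H_1(W_-;\Z[\Z])\cong A$ under which the inclusion-induced map $H\to H_1(W_-;\Z[\Z])$ becomes the projection $H\twoheadrightarrow A$. The required extension is then simply
\[
\pi_1(W_-)\twoheadrightarrow \pi_1(W_-)/\pi_1(W_-)^{(2)}\cong \Z\ltimes H_1(W_-;\Z[\Z])\xrightarrow{\ \cong\ }\Z\ltimes A\longrightarrow \Z\ltimes\big(R[\Z]\otimes_{\Z[\Z]}A\big),
\]
where the final arrow is the functorial change of coefficients on the second factor. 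No flatness of $\Z[\Z]\to R[\Z]$ is invoked, and $H_1(W_-;R[\Z])$ never appears. This is exactly how the paper proceeds. (Incidentally, your handling of the Strebel-class condition in characteristic $p$---observing that the target is not literally PTFA but lies in $D(\F_p)$ because the abelian kernel is an $\F_p$-vector space---is actually more careful than the paper's phrasing, which asserts PTFA uniformly.)
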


\begin{proof}As in \fullref{prop:metabelian}, there is an isomorphism $H\cong H_1(X_K;{\Z[\Z]})\cong H_1(M_K;{\Z[\Z]})$. Let $D_A, D_B\subseteq D^{4\ell}$ be complementary slice discs for $K$ with respective disc exteriors $W_A, W_B$. The modules
\begin{align*}
A&:=\ker\big(H_1(M_K;{\Z[\Z]})\to H_1(W_A;{\Z[\Z]})\big)\\
B&:=\ker\big(H_1(M_K;{\Z[\Z]})\to H_1(W_B;{\Z[\Z]})\big)
\end{align*}
determine a decomposition of ${\Z[\Z]}$-modules $A\oplus B\cong H_1(M_K;{\Z[\Z]})\cong H$, since $W_A \cup_{X_K} W_B$ is the exterior of an unknot, so $H_i(W_A \cup_{X_K} W_B;\Z[\Z]) =0$ for $i=1,2$.

Fix $\phi\in \mathcal{H}(\phi_A)\cup\mathcal{H}(\phi_B)$. We will assume that $\phi\in\mathcal{H}(\phi_A)$, the case of $\phi\in\mathcal{H}(\phi_B)$ can be argued entirely similarly. We have $\phi=\phi_A^\F$ for one of $\F=\Q$ or $\F=\F_p$, for some prime $p\in\N$. Write $j_B\colon M_K\hookrightarrow W_B$ for the inclusion of the boundary. The inclusions of the boundary determine an isomorphism $H_1(M_K;{\Z[\Z]})\cong H_1(W_A;{\Z[\Z]})\oplus H_1(W_B;{\Z[\Z]})$. This implies there is an isomorphism $A\cong H_1(W_B;{\Z[\Z]})$ such that, under the isomorphism $A\oplus B\cong H_1(M_K;{\Z[\Z]})\cong H$, the map $(j_B)_*$ corresponds to the projection of $H$ to $A$. In particular, the sequence of maps \[\psi\colon \pi_1(W_B)\to \pi_1(W)/\pi_1(W)^{(2)}\cong \Z\ltimes H_1(W_B;{\Z[\Z]})\cong \Z \ltimes A \to\Z\ltimes (\F[\Z]\otimes_{\Z[\Z]} A)\]determines an extension of the representation $\phi$ to $\pi_1(W_B)$.

Define $\Gamma:=\Z\ltimes \left( \F[\Z]\otimes_{\Z[\Z]} A\right)$. We wish to use $(W_B,\psi)$ to calculate $\rho^{(2)}(M_K,\phi)=0$. First, note that as $H_*(W_B;\Q)\cong H_*(S^1;\Q)$ we have that $\sgn(W_B)=0$, so if we can show that $\sgn^{(2)}(W_B,\psi)=0$, then the proof will be complete. In fact we will show the stronger statement that $\dim_{\mathcal{N}\Gamma}H_{2l}(W_B;\mathcal{N}\Gamma)=0$.

Recall a locally compact topological group $G$ is \emph{amenable} if $G$ admits a finitely-additive measure which is invariant under the left multiplication; see e.g.\ \cite{Paterson:1988-1}. We now recall the theorem of Cha-Orr \cite[Theorem 6.6]{Cha-Orr:2009-01} which states that if $G$ is an amenable group in the Strebel class $D(R)$, for some $R$, and $C$ is a bounded chain complex of finitely generated left $\Z[G]$-modules, then when $H_i(R\otimes_{\Z[G]}C)=0$ for $i\leq n$ we also have $\dim_{\mathcal{N}G}H_*(\mathcal{N} G\otimes_{\Z[G]} C)=0$ for $i\leq n$.

As $\F$ is a field, the group $\Gamma$ is PTFA as the normal subgroup $\F[\Z]\otimes_{\Z[\Z]} A$ is already torsion free abelian. It is well known that all solvable groups, a class which includes PTFA groups, are amenable, and we saw in \fullref{lem:strebelclass} that all PTFA groups are in~$D(\F)$. To apply the theorem of Cha-Orr, we will set $R=\F$, $G=\Gamma$, and must choose an appropriate chain complex $C$. Let $S^1\hookrightarrow W_B$ be an inclusion representing a generator of $H_1(W_B;\Z)\cong\Z$, so in particular the obvious map $\pi_1(S^1)\cong\Z\to \Z\ltimes A$ is compatible with $\psi$ and we may take $\Z\Gamma$-coefficient homology for this $S^1$ compatibly with $W_B$. It now makes sense to define~$C:=\cone(C(S^1;\Z\Gamma)\to C(W_B;\Z\Gamma))$, the algebraic mapping cone. A straightforward Mayer-Vietoris argument using the decomposition $D^{4\ell} = W_B \cup_{D^{4\ell-2} \times S^1} D^{4\ell-2} \times D^2$ shows that $H_*(W_B,S^1;\Z)=0$, so in particular $H_*(\F\otimes_{\Z\Gamma}C)=0$ and hence $\dim_{\mathcal{N}\Gamma}H_i(\mathcal{N} \Gamma\otimes_{\Z \Gamma} C)=0$ for all $i\in\Z$ by the Cha-Orr theorem.

Denote  the chain complex desuspension functor by $\Sigma^{-1}$. The short exact sequence of cellular $\Z\Gamma$-module chain complexes\[0\to C(W_B;\Z\Gamma)\to \cone(C(S^1;\Z\Gamma)\to C(W_B;\Z\Gamma))\to \Sigma^{-1}C(S^1;\Z\Gamma)\to 0\]is split exact in every homological degree (by construction), so applying the functor $\mathcal{N}\Gamma\otimes_{\Z\Gamma}-$ results in another short exact sequence and hence there is an induced long exact sequence of $\mathcal{N}\Gamma$-modules\[\dots\to H_{i-1}(\mathcal{N}\Gamma\otimes_{\Z\Gamma}C)\to H_i(S^1;\mathcal{N}\Gamma)\to H_i(W_B;\mathcal{N}\Gamma)\to H_{i}(\mathcal{N}\Gamma\otimes_{\Z\Gamma}C)\to\dots\]
As $\dim_{\mathcal{N}\Gamma}H_i(\mathcal{N} \Gamma\otimes_{\Z \Gamma} C)=0$ for all $i\in\Z$, this shows that for all $i\in\Z$ we have that $\dim_{\mathcal{N}\Gamma} H_i(S^1;\mathcal{N}\Gamma)= \dim_{\mathcal{N}\Gamma}H_i(W_B;\mathcal{N}\Gamma)$; see \cite[Theorem 6.7(4b)]{Lueck:2002-1}. But the chain complex $C(S^1;\Z\Gamma)$ is concentrated in degrees 0 and 1, so $H_{2l}(S^1;\mathcal{N}\Gamma)=0$ and hence we obtain $\dim_{\mathcal{N}\Gamma}H_{2l}(W_B;\mathcal{N}\Gamma)=\dim_{\mathcal{N}\Gamma}H_{2l}(S^1;\mathcal{N}\Gamma)=0$ as claimed.\end{proof}

\subsection{Calculation of $L^{(2)}$ $\rho$-invariants}

The main method for calculating $L^{(2)}$ $\rho$-invariants is to identify a cyclic subgroup $\Gamma'\subset \Gamma$ and use the following consequence of \emph{$L^{(2)}$ induction}; see \cite[Theorem 6.29]{Lueck:2002-1}.

\begin{proposition}\label{prop:trick}If $\Phi\colon\Gamma'\to\Gamma$ is an injective homomorphism, then for each $n\in\N$, there is a commutative diagram
\[
\begin{tikzcd}
\Herm_{n\times n}(\mathcal{N}\Gamma')\arrow[rr,]\arrow[dr, "\sgn^{(2)}"' ]&&\Herm_{n\times n}(\mathcal{N}\Gamma)\ar{ld}{\sgn^{(2)}}\\
&\R&
\end{tikzcd}
\]
\end{proposition}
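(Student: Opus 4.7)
The plan is to derive the commutative triangle as a direct instance of $L^{(2)}$-induction, as packaged in \cite[Theorem 6.29]{Lueck:2002-1}.

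First I would make the (unlabelled) horizontal map precise. Because $\Phi\colon \Gamma' \hookrightarrow \Gamma$ is injective, it extends to an injective $*$-homomorphism of group von Neumann algebras $\Phi_*\colon \mathcal{N}\Gamma' \hookrightarrow \mathcal{N}\Gamma$: on $\C\Gamma'$ it is linear extension of $\Phi$, and it extends continuously to the von Neumann completions because $\Phi$ realises $\ell^2\Gamma$ as a $\Gamma'$-equivariant orthogonal sum of copies of $\ell^2\Gamma'$, one for each coset of $\Phi(\Gamma')$ in $\Gamma$. Since $\Phi$ intertwines the involutions $g \mapsto g^{-1}$, applying $\Phi_*$ entrywise sends $\Herm_{n\times n}(\mathcal{N}\Gamma')$ into $\Herm_{n\times n}(\mathcal{N}\Gamma)$; this is the top arrow of the diagram.

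Second, I would separately verify that the two ingredients of $\sgn^{(2)}$ behave well under $\Phi_*$. Recall that $\sgn^{(2)}(U) = \tr_\Gamma(p_+(U)) - \tr_\Gamma(p_-(U))$, where $p_\pm(U)$ arise from bounded Borel functional calculus applied to the self-adjoint element $U$. Because $\Phi_*$ is a $*$-homomorphism of von Neumann algebras it commutes with bounded Borel functional calculus, so $p_\pm(\Phi_*(U)) = \Phi_*(p_\pm(U))$. Moreover, the core $L^{(2)}$-induction identity asserts that $\tr_\Gamma(\Phi_*(q)) = \tr_{\Gamma'}(q)$ for any projection $q$ in $\Mat_{n\times n}(\mathcal{N}\Gamma')$. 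Combining these,
\[
\sgn^{(2)}(\Phi_*(U)) = \tr_\Gamma(\Phi_* p_+(U)) - \tr_\Gamma(\Phi_* p_-(U)) = \tr_{\Gamma'}(p_+(U)) - \tr_{\Gamma'}(p_-(U)) = \sgn^{(2)}(U),
\]
which is precisely commutativity of the triangle.

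The main obstacle, and where injectivity is actually used, is the trace-preservation identity $\tr_\Gamma \circ \Phi_* = \tr_{\Gamma'}$. I would not reprove this, but cite \cite[Theorem 6.29]{Lueck:2002-1} directly. The geometric content is that for injective $\Phi$ the decomposition $\ell^2\Gamma \cong \bigoplus_{\Gamma/\Phi(\Gamma')} \ell^2\Gamma'$ makes the coefficient of $e\in\Gamma$ in $\Phi_*(x)$ equal to the coefficient of $e\in\Gamma'$ in $x$, so the two von Neumann traces agree on the image of $\Phi_*$; this is exactly what fails without injectivity.
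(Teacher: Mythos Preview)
Your proposal is correct and follows essentially the same route as the paper: both arguments reduce to the trace-preservation identity $\tr_\Gamma \circ \Phi_* = \tr_{\Gamma'}$ (the paper cites \cite[Proposition~5.13]{MR1973052} for this and passes through the induced map on $K_0$, whereas you cite \cite[Theorem~6.29]{Lueck:2002-1} and argue directly that $\Phi_*$ commutes with the functional calculus on projections), and then both conclude via the formula $\sgn^{(2)}(U) = \tr_\Gamma(p_+(U)) - \tr_\Gamma(p_-(U))$.
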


\begin{proof} The commutative diagram of \cite[Proposition 5.13]{MR1973052} (below, left) induces a commutative diagram in $K$-theory (below, right).
\[
\begin{tikzcd}
\mathcal{N}\Gamma'\arrow[rr,]\arrow[dr, "\tr_{\Gamma'}"' ]&&\mathcal{N}\Gamma\ar{ld}{\tr_\Gamma}
&&
K_0(\mathcal{N}\Gamma')\arrow[rr,]\arrow[dr, "\tr_{\Gamma'}"' ]&&K_0(\mathcal{N}\Gamma)\ar{ld}{\tr_\Gamma}\\
&\C&
&&
&\C&
\end{tikzcd}
\]
The proposition now follows immediately from the description of the $L^{(2)}$-signature of a hermitian matrix as a trace $\sgn^{(2)}(U)=\tr_\Gamma(p_+(U))-p_-(U))$.
\end{proof}

This proposition provides a computational strategy first exploited in \cite{MR1973052}. If we can identify a cyclic subgroup, then we can make our calculations using coefficients in this restricted setting. When working over a cyclic group, the computation of $L^{(2)}$ signatures, and thus $\rho$-invariants, is very well understood in terms of ordinary signatures.

Let $\Gamma$ be a group and fix $g\in \Gamma$, with order $\ord(g)$. Let $\Gamma'=\langle u\,|\,u^{\ord(g)}=1\rangle$ be a cyclic group of order $\ord(g)$ and define an injective homomorphism $\Phi_g\colon\Gamma'\to \Gamma$ by $\Phi_g(u) = g$. For each $n\in\N$, there is an induced map $\Phi_g\colon \Herm_{n\times n}(\Z[\Gamma'])\to \Herm_{n\times n}(\Z [\Gamma])$. Let $\omega\in S^1\subset \C$. We define a further map
\[
\epsilon_\omega\colon\im(\Phi_g\colon \Herm_{n\times n}(\Z[\Gamma'])\to \Herm_{n\times n}(\Z [\Gamma]))\to \Herm_{n\times n}(\C);\quad g\mapsto \omega.
\]

\noindent The following proposition is essentially due to Cochran, Orr, and Teichner~\cite[\textsection 5]{MR1973052}.

\begin{proposition}\label{prop:Friedl} Let $M$ be a closed, oriented $(4\ell-1)$-manifold and $\phi\colon M\to BG$ for some group $G$. Suppose we have chosen $(W,\Gamma, j,\psi)$ as in Diagram (\ref{eq:diagram}), with the further property that $\sgn(W)=0$. Fix $g\in \Gamma$ and suppose that the $\psi$-twisted intersection form of $W$ is represented by a matrix $U\in\im(\Phi_g)\subset \Herm_{n\times n}(\Z[\Gamma])$.
\begin{enumerate}[leftmargin=*]

\item \label{item:case1} Suppose $g$ has finite order $k\in\N$. Then
\[
\rho^{(2)}(M,\varphi)=\frac{1}{rk}\sum_{j=1}^k\sgn(\eps_{\omega^j}(U)),
\]
where $\omega\in S^1\subset \C$ is a primitive $k$th root of unity.
\item \label{item:case2}
Suppose $g$ has infinite order. Then
\[
\rho^{(2)}(M,\varphi)=\frac{1}{2\pi r}\int_{\omega\in S^1}\sgn(\eps_{\omega}(U)).
\]

\end{enumerate}
\end{proposition}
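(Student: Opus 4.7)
The plan is to reduce both cases to a computation of an $L^{(2)}$-signature over the cyclic subgroup $\Gamma' = \langle g \rangle$ and then to exploit the explicit structure of $\mathcal{N}\Gamma'$. Since $\sgn(W) = 0$ by hypothesis, \fullref{def:rho} gives $\rho^{(2)}(M,\varphi) = \tfrac{1}{r}\sgn^{(2)}(W,\psi)$. The assumption $U\in\im(\Phi_g)$ provides a lift $U'\in\Herm_{n\times n}(\Z[\Gamma'])$ with $\Phi_g(U')=U$, and \fullref{prop:trick} then reduces the computation to evaluating $\sgn^{(2)}(U')$ over $\mathcal{N}\Gamma'$. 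It therefore suffices to handle the two cyclic cases separately.

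For case (\ref{item:case1}), where $g$ has finite order $k$, I would use the character decomposition $\C[\Gamma']\cong \bigoplus_{j=1}^{k}\C$ given by $\chi_j(u) = \omega^j$ (in the same spirit as \fullref{lem:obvs}, but with all representations one-dimensional). Under this decomposition the hermitian form $(\C[\Gamma']^n, U')$ splits as the orthogonal sum of the hermitian forms $(\C^n, \epsilon_{\omega^j}(U))$, and the von Neumann trace $\tr_{\Gamma'}$ corresponds to the normalised average of evaluations at the characters $\chi_j$. The characterisation of $\sgn^{(2)}$ as $\tr_{\Gamma'}\circ (p_+ - p_-)$ then yields
\[
\sgn^{(2)}(U') = \tfrac{1}{k}\sum_{j=1}^k \sgn\bigl(\epsilon_{\omega^j}(U)\bigr),
\]
and dividing by $r$ yields the claimed formula.

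For case (\ref{item:case2}), where $g$ has infinite order, the plan is to use the Fourier-theoretic identification $\mathcal{N}\Z\cong L^\infty(S^1, d\mu)$ with $d\mu = d\theta/(2\pi)$, under which the von Neumann trace $\tr_{\Gamma'}$ becomes integration against the normalised Haar measure. Under this identification the matrix $U'$ corresponds to a continuous matrix-valued function on $S^1$ whose value at $\omega$ is $\epsilon_\omega(U)$. Since the Borel functional calculus on the commutative algebra $L^\infty(S^1)$ acts pointwise, $p_+(U')$ and $p_-(U')$ correspond at each $\omega$ to the spectral projections of $\epsilon_\omega(U)$ onto its positive and negative eigenspaces, so the $L^{(2)}$-signature becomes the integral of the pointwise ordinary signatures, giving $\sgn^{(2)}(U') = \tfrac{1}{2\pi}\int_{S^1}\sgn(\epsilon_\omega(U))$. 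Dividing by $r$ finishes the argument.

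The main obstacle is the technical content of case (\ref{item:case2}): one must justify carefully that under the Fourier isomorphism $\mathcal{N}\Z\cong L^\infty(S^1)$ the spectral projections of a self-adjoint element correspond to pointwise spectral projections, and that the von Neumann trace corresponds to Lebesgue integration against the normalised measure. These facts are standard, and since the proposition is attributed to \cite[\textsection 5]{MR1973052}, the cleanest route is to cite the identifications developed there rather than redeveloping the spectral theory from scratch.
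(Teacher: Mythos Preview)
Your proposal is correct and follows essentially the same approach as the paper: reduce via \fullref{prop:trick} to $\sgn^{(2)}$ over the cyclic subgroup $\Gamma'$, handle the finite case by the character decomposition of $\C[C_k]$ (the paper phrases this as comparing the von Neumann trace with the ordinary trace of the regular representation, but this is the same computation), and in the infinite case cite \cite[Lemma~5.4]{MR1973052} for the Fourier-theoretic identification $\mathcal{N}\Z\cong L^\infty(S^1)$ rather than redoing the spectral theory.
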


\begin{proof}\leavevmode
\begin{enumerate}[leftmargin=*]
\item Write $\Gamma'=C_k=\langle u\,|\,u^k=1\rangle$ for the cyclic group of order $k$. Write $V\in\Herm_{n\times n}(\Z[C_k])$ for the matrix such that $\Phi_g(V)=U$. By \fullref{prop:trick}, the signature $\sgn^{(2)}(W,\psi)=\sgn^{(2)}(U)$ is given by $\sgn^{(2)}(V)$. Using as basis $1,u,\dots,u^{k-1}$, we obtain an isomorphism $\C[C_k]\cong \C^k$. For clarity, we will write $V$ as $V^\C$ when we view it as an automorphism of $(\C^{k})^n$ via the left regular representation. Because $C_k$ is a finite group, there is equality $\C[C_k]=\mathcal{N} C_k$, and the von Neumann trace of a general element $a=a_0+a_1u+\dots a_{k-1}u^{k-1}\in\C[C_k]$ is $\tr_{\mathcal{N}\Gamma}(a)=a_0\in\C$. On the other hand, recall that the standard trace of $a\in\C[C_k]$ under the regular representation is given by $\tr(a^\C)=k\cdot a_0$, and that ordinary signature $\sgn(V^\C)$ may be interpreted as an ordinary trace $\tr\colon K_0(\C\Gamma)\to\Z$. Comparing the two types of trace applied to $p_+(V)-p_-(V)\in K_0(\mathcal{N}\Gamma)= K_0(\C\Gamma)$, we obtain $k\cdot \sgn^{(2)}(V)=\sgn(V^\C)$. It remains to calculate $\sgn(V^\C)$, but this is standard.

Decompose $(\C[C_k])^n$ according to \fullref{lem:obvs}, noting that the irreducible representations of $C_k$ are given by the one-dimensional characters $\chi^j(u)=\omega^j\in \C$, $j=1,\dots,k$. The signature of $V^\C$ restricted to the $\omega^j$-eigenspace is given by $\sgn(\epsilon_{\omega^j}(U))$, and hence
\[
\sgn^{(2)}(V)=\frac{1}{k}\sgn(V^\C)=\frac{1}{k}\sum_{j=1}^k\sgn(\eps_{\omega^j}(U)).
\]
The claimed result follows, as $\sgn(W)=0$ by assumption.

\item Write $\Gamma'=\Z\langle u\rangle$. Write $V\in\Herm_{n\times n}(\Z[u,u^{-1}])$ for the matrix such that $\Phi_g(V)=U$. By \fullref{prop:trick}, the signature $\sgn^{(2)}(W,\psi)$ is given by $\sgn^{(2)}(V)$. By \cite[Lemma 4.5]{MR1973052} this may be calculated as
\[
\sgn^{(2)}(V)=\frac{1}{2\pi}\int_{\omega\in S^1}\sgn(\eps_{\omega}(U)).
\]
The claimed result follows, as $\sgn(W)=0$ by assumption.
\end{enumerate}
\end{proof}

\section{New non doubly slice knots}

\begin{definition} Let $G$ be a group. We will call a matrix $U\in\Herm_{r\times r}(\Z[G])$ \emph{even} if it is of the form $U=V+\overline{V}^T$ for some matrix $V$ with entries in $\Z[G]$.
\end{definition}

For a group homomorphism $\alpha\colon G\to H$, we abuse notation and also use $\alpha$ to denote the induced homomorphisms $\Z[G]\to \Z[H]$ and $\Herm_{r\times r}(\Z[G])\to \Herm_{r\times r}(\Z[H])$.

\begin{definition} Let $\alpha\colon G\to H$ be a group homomorphism. Then $U\in\Herm_{r\times r}(\Z[G])$ is \emph{$\alpha$-nonsingular} if $\alpha(U)$ is nonsingular. In the particular case that $K$ is a knot and $\alpha\colon \pi_K\to \Z$ is the abelianisation, we will call an $\alpha$-nonsingular matrix \emph{$\Z[\Z]$-nonsingular}.
\end{definition}

An even, $\alpha$-nonsingular matrix represents an element of the Cappell-Shaneson homology surgery obstruction group $\Gamma_{4\ell}(\alpha\colon \Z[G]\to \Z[H])$ \cite{Cappell-Shaneson:1974-1}. We recall the standard procedure for realising such elements as the surgery obstruction associated to a $4\ell$-manifold with boundary.

\begin{theorem}[{\cite[Theorem 1.8 \& Addendum 1.8]{Cappell-Shaneson:1974-1}}]\label{thm:CapSha}
Let $\ell>1$, and let $(X,\partial X)$ be a $(4\ell-1)$-manifold with boundary. Suppose there is a group homomorphism $\phi\colon\pi\to G$, where $\pi=\pi_1(X)$. Let $U\in\Herm_{r\times r}(\Z[G])$ be even. Then there exists a $4\ell$-dimensional manifold triad $(W;\partial_+W,\partial_-W)$ such that $\partial_+W=-X\sqcup Y$, $\partial_-W=\partial X\times[0,1]$ $($implying $\partial Y=\partial X)$, and such that:
\begin{itemize}
\item There is a degree one map $(f, \Id)\colon (Y,\partial Y)\to (X,\partial X)$ such that $\pi_i(f)$ is an isomorphism for $0\leq i\leq 2\ell-2$.
\item There is a degree one map $F\colon W\to X\times[0,1]$ such that $\pi_i(F)$ is an isomorphism for $0\leq i\leq 2\ell-1$, and the middle dimensional homology $H_{2\ell}(W;\Z[G])$ is a free module and the middle dimensional intersection form with coefficients in $\Z[G]$ is given by ~$U$.
\item If $U$ is $\alpha$-nonsingular for some $\alpha\colon G\to H$, the map $f$ induces an isomorphism on homology with $\Z[H]$-coefficients.
\end{itemize}
\end{theorem}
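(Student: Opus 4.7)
The plan is to realise $U$ as the equivariant middle-dimensional intersection form of a handlebody built on top of the trivial cobordism $X\times[0,1]$. This is Wall's classical $L$-group realisation procedure adapted to the homology-surgery setting of the Cappell--Shaneson $\Gamma$-groups: the only new ingredient compared to Wall's original argument is that intersection numbers are measured in $\Z[G]$ via $\phi$, rather than in $\Z[\pi]$.

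Concretely, I would start from $W_0 := X\times[0,1]$ and work inside a collar of $X\times\{1\}$. First, choose $r$ small, disjoint, trivially embedded $(2\ell-1)$-spheres $S_1,\dots,S_r$ in $X\times\{1\}$, each with a fixed basepoint path back to the basepoint of $X$. For each pair $i < j$ and each monomial $g\in G$ appearing in the $(i,j)$-entry of $U$, modify the embeddings by an equivariant pipe-connect-sum (tubing) along a loop in $X\times\{1\}$ whose $\pi$-class maps to $g$ under $\phi$; this adjusts the equivariant intersection number of $S_i$ and $S_j$ by the prescribed element of $\Z[G]$. The diagonal entries $U_{ii}$ are realised through the framings of the $2\ell$-handle attachments, which are classified by $\pi_{2\ell-1}(SO(2\ell))$: the evenness hypothesis $U = V+\overline V^T$ is precisely what guarantees the required self-intersection data is realisable by framings, as the transposition indeterminacy $V-\overline V^T$ arising from reordering of handle indices is absorbed into the choice of $V$. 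Attach the $2\ell$-handles to produce $W$; by construction $\partial_-W = \partial X\times[0,1]$ and $\partial_+W = -X\sqcup Y$, where $Y$ is the effect of the surgery on $X\times\{1\}$.

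The required maps follow by collapsing. Sending each handle to its attaching region yields a degree-one map $F\colon W\to X\times[0,1]$; since all attached handles have index $2\ell$, the map $F$ is $(2\ell-1)$-connected, and its restriction $f\colon Y\to X$ is $(2\ell-2)$-connected for the same reason. The relative cellular chain complex of $F$ with $\Z[G]$-coefficients is concentrated in degree $2\ell$ and is free on the $r$ handle cores, with middle-dimensional intersection form $U$ by the tubing and framing choices above. For the final claim, applying $\Z[H]\otimes_{\Z[G]}-$ gives a relative complex concentrated in degree $2\ell$ with differential $\alpha(U)$; when $\alpha(U)$ is an isomorphism the relative complex is acyclic, so $H_*(W, X\times\{0\}; \Z[H]) = 0$ and $f$ is a $\Z[H]$-homology equivalence.

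The main technical obstacle is the equivariant bookkeeping in the pipe-connect-sum step: one must verify that the effect of each tubing on the $\Z[G]$-intersection form is precisely the prescribed monomial, rather than merely an element that agrees with it after passing to some coarser homomorphism. This is the substance of Wall's realisation argument carried out relative to $\phi$, and it is exactly what the evenness hypothesis on $U$ is tailored to enable; once that bookkeeping is in place, the three bullet points drop out from the handle structure of $W$ and standard relative-homology arguments.
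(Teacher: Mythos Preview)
The paper does not supply a proof of this theorem; it is quoted as a black box from Cappell--Shaneson and used only through \fullref{cor:newknots}. Your sketch is the standard Wall realisation argument adapted to the $\Gamma$-group setting, which is indeed how Cappell--Shaneson proceed, so in outline you are reconstructing the cited proof rather than proposing an alternative.

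There is, however, a concrete slip in your treatment of the third bullet. You write that the relative complex $C_*(W,X\times\{0\};\Z[H])$ is ``concentrated in degree $2\ell$ with differential $\alpha(U)$'' and conclude $H_*(W,X\times\{0\};\Z[H])=0$. A complex concentrated in a single degree has no room for a nonzero differential, and in fact $H_{2\ell}(W,X;\Z[H])\cong\Z[H]^r$ irrespective of $U$: it is freely generated by the handle cores. The object that actually carries $\alpha(U)$ as a map is the two-term surgery kernel complex of $f$, sitting in degrees $2\ell-1$ and $2\ell$. Equivalently, the handle cores generate $H_{2\ell}(W,X;\Z[H])$, the cocores generate $H_{2\ell}(W,Y;\Z[H])$, Poincar\'e--Lefschetz duality identifies the latter with $H^{2\ell}(W,X;\Z[H])$, and the intersection pairing between cores and cocores is $\alpha(U)$. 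When $\alpha(U)$ is invertible, comparing the long exact sequences of the pairs $(W,X)$ and $(W,Y)$ forces the inclusions $X\hookrightarrow W\hookleftarrow Y$ to be $\Z[H]$-homology equivalences, and hence so is $f$. Once you replace your one-degree complex by this two-term argument, the rest of your sketch goes through.
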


\noindent For our purposes, this has the following corollary.

\begin{corollary}\label{cor:newknots} For $\ell>1$, let $K$ be a $(4\ell-3)$-knot. Let $\phi\colon \pi_K\to G$ be a surjective homomorphism factoring through the abelianisation of $\pi_K$ and let $U\in\Herm_{r\times r}(\Z[G])$ be even. Then there exists a $(4\ell-3)$-knot $K'$ with $\pi_{K'}=\pi_K$ and such that $M_K$ and $M_{K'}$ are cobordant, via a $4\ell$-manifold $Z$, where $Z$ has $\Z[G]$-coefficient intersection form $U$.

If $U$ is moreover $\Z[\Z]$-nonsingular, then there is a $\Z[\Z]$-homology equivalence between the exterior of $K$ and the exterior of $K'$, that is the identity on the boundary and preserves meridians.
\end{corollary}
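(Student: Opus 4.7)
The plan is to apply \fullref{thm:CapSha} to $X := M_K$ (noting $\partial M_K = \emptyset$), with the given homomorphism $\phi\colon \pi_K \to G$ and the even matrix $U$. This produces a $4\ell$-dimensional cobordism $Z$ with $\partial Z = -M_K \sqcup Y$, a degree-one map $f\colon Y \to M_K$ inducing an isomorphism on $\pi_i$ for $i \leq 2\ell-2$ (in particular on $\pi_1$), and $\Z[G]$-intersection form $U$ on $H_{2\ell}(Z;\Z[G])$. The main task will then be to identify $Y$ with the surgery manifold $M_{K'}$ of a new $(4\ell-3)$-knot $K'$.

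To do this, I would fix a meridian $\mu_K$ of $K$ with tubular neighbourhood $\nu\mu_K \cong S^1 \times D^{4\ell-2}$. In the Cappell--Shaneson construction, $Z$ is built from $M_K \times [0,1]$ by attaching $2\ell$-handles along null-homotopic $(2\ell-1)$-spheres; by general position and the fact that these spheres have vanishing linking number with $\mu_K$, they can be isotoped into $X_K \times \{1\}$ away from $\nu\mu_K \times \{1\}$. The slab $\nu\mu_K \times [0,1]$ then persists in $Z$, yielding a distinguished meridian $\mu \subset Y$ with identified tubular neighbourhood $\nu\mu$. Setting $X_{K'} := Y \setminus \nu\mu$, $\Sigma := X_{K'} \cup_\partial (D^2 \times S^{4\ell-3})$, and $K' := \{0\} \times S^{4\ell-3} \subset \Sigma$, I would verify $\Sigma \cong S^{4\ell-1}$ as follows: van Kampen shows $\pi_1(\Sigma) = \pi_K / \langle \text{meridian}\rangle = 1$; then a homological analysis via the truncated cobordism $\tilde Z := (Z \sm (\nu\mu_K \times [0,1])) \cup (D^2 \times S^{4\ell-3} \times [0,1])$, which is a $4\ell$-cobordism from $S^{4\ell-1}$ to $\Sigma$ carrying the same $2\ell$-handles, together with Poincar\'{e}--Lefschetz duality and the handle structure will show $H_*(\Sigma;\Z) \cong H_*(S^{4\ell-1};\Z)$, possibly after stabilising $U$ by hyperbolic summands (which does not change the cobordism class). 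The topological Poincar\'{e} conjecture in dimension $4\ell-1 \geq 7$ then gives $\Sigma \cong S^{4\ell-1}$, so $K'$ is genuine. Since $\mu$ has codimension $4\ell-2 \geq 5$ in $Y$, we have $\pi_{K'} = \pi_1(X_{K'}) = \pi_1(Y) = \pi_K$, and $Z$ is the required cobordism.

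For the final claim, assume $U$ is $\Z[\Z]$-nonsingular, in the sense that the natural map $\alpha\colon G \to \Z$ (arising because $\phi$ factors through the abelianisation) sends $U$ to a nonsingular matrix. The third bullet of \fullref{thm:CapSha} then says $f\colon Y \to M_K$ is an isomorphism on $\Z[\Z]$-homology. I would modify $f$ within its homotopy class so that it restricts to the identity on the meridian neighbourhoods $\nu\mu \to \nu\mu_K$; this is possible since $f|_\mu$ is a homotopy equivalence of circles and both normal bundles are trivial, so it extends to a homeomorphism of tubular neighbourhoods. The restriction $\bar f\colon X_{K'} \to X_K$ is then the identity on the common boundary $S^1 \times S^{4\ell-3}$, preserves meridians, and a Mayer--Vietoris and five-lemma argument comparing the decompositions $M_K = X_K \cup \nu\mu_K$ and $Y = X_{K'} \cup \nu\mu$ upgrades $f$'s $\Z[\Z]$-homology equivalence to one for $\bar f$.

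The hardest step will be verifying $\Sigma \cong S^{4\ell-1}$: this requires careful control over the Cappell--Shaneson handle decomposition and, depending on the form of $U$, stabilisation by hyperbolic summands to ensure $\Sigma$ has the integer homology of a sphere. Once that is settled, the remainder of the argument proceeds by standard cobordism-and-boundary manipulations.
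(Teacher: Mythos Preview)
Your overall strategy is sound but takes a longer route than the paper. The paper applies \fullref{thm:CapSha} directly to the knot \emph{exterior} $(X_K,\partial X_K)$ rather than to the closed manifold $M_K$. Because the Cappell--Shaneson cobordism is then relative to $\partial X_K \times [0,1]$, the output $Y$ already has boundary $S^{4\ell-3}\times S^1$ and the degree-one map $(f,\Id)\colon (Y,\partial Y)\to (X_K,\partial X_K)$ is already the identity on the boundary. This removes the need to isotope the attaching spheres off a meridian, and for the second claim it delivers a boundary-fixing $\Z[\Z]$-homology equivalence immediately---no homotopy modification of $f$ is required. The closed cobordism $Z$ between $M_K$ and $M_{K'}$ is then obtained \emph{a posteriori} by gluing $D^{4\ell-2}\times S^1\times[0,1]$ along $\partial_-W$. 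Your general-position argument for pushing the $(2\ell-1)$-spheres off $\mu_K$ is valid by the dimension count, so your approach can be made to work, but it reconstructs by hand the relative structure that the paper obtains for free.

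There is one genuine error in your outline. You suggest that if $\Sigma$ fails to be an integral homology sphere one can repair this by stabilising $U$ with hyperbolic summands. This does not help: under the augmentation $\eps\colon \Z[G]\to\Z$, a hyperbolic summand stays nonsingular, so $\eps(U\oplus H)=\eps(U)\oplus\eps(H)$ is nonsingular over $\Z$ if and only if $\eps(U)$ already is. The correct observation (which the paper also leaves implicit) is that once $U$ is $\Z[\Z]$-nonsingular---as assumed in the second part of the statement and in every application---then $\eps(U)$ is unimodular over $\Z$, the third bullet of \fullref{thm:CapSha} makes $f$ a $\Z$-homology equivalence, and $\Sigma$ is a simply connected integral homology sphere to which the topological Poincar\'e conjecture applies.
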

\begin{proof}

Apply \fullref{thm:CapSha} to $(X_K,\partial X_K=S^{4\ell-3}\times S^1)$, to obtain $(W;\partial_+W,\partial_-W)$ with interior boundary $\partial_-W=S^{4\ell-3}\times S^1\times[0,1]$ and exterior boundary $\partial_+W=-X_K\sqcup Y$. To confirm that $Y$ is the exterior of some knot $K'$, glue a copy of $S^{4\ell-3}\times D^2$ along $\partial Y$. The resulting manifold is homotopy equivalent to $S^{4\ell-1}$, and thus homeomorphic to $S^{4\ell-1}$ by the (topological) Poincar\'{e} conjecture. The glued-in core $S^{4\ell-3}\times \{\pt\}=:K'$ is the promised knot.

Now construct a $4\ell$-manifold with boundary $(Z, -M_K\sqcup M_{K'})$ by
\[
Z:=W\cup_{\partial_-W}\left(D^{4\ell-2}\times S^1\times[0,1]\right).
\]
This glueing is unambiguous, as every homeomorphism of $S^{4\ell-3}\times S^1\times[0,1]$ extends to a homeomorphism of $D^{4\ell-2}\times S^1\times[0,1]$.
\end{proof}

\begin{remark}
  We could change the construction so that we have a diffeomorphism to $S^{4\ell-3}$, just by connect summing with the appropriate exotic sphere away from the knot.  This will not change any $L^{(2)}$-signature obstruction, since that is a homeomorphism invariant.
\end{remark}

\begin{proof}[Proof of \fullref{thm:family}]
Fix $n>3$ odd and let $H=\Z[t,t^{-1}]/p(t)$ where $p(t)\in\Z[t,t^{-1}]$ is irreducible. Use \fullref{lem:buildtheknot} to build a doubly slice $n$-knot $K$ with metabelian fundamental group $\pi_K\cong\Z\ltimes H$. By \fullref{thm:main}, we obtain a decomposition of ${\Z[t,t^{-1}]}$-modules $H\cong A\oplus B$. As $p$ is irreducible we may assume that $B=0$ and $A=H$. Denote by $\alpha\colon \pi_K\to\pi_K$ the identity map.

Regarding $H$ as an abelian group, choose an element of infinite order $g\in H$ and write $\Phi_g\colon\Z\langle u\rangle\to \pi_K$ for the injective group homomorphism defined by $u\mapsto g$. For every $r\in\N$, this extends to a homomorphism $\Phi_g\colon\Herm_{r\times r}(\Z[u,u^{-1}])\to\Herm_{r\times r}(\Z[\pi_K])$. Suppose we have chosen an even matrix $U\in \im(\Phi_g)$ such that $U$ becomes nonsingular upon applying the abelianisation induced map $\Z[\pi_K]\to\Z[t,t^{-1}]$, and moreover has vanishing signature under the augmentation $t\mapsto 1$ (we will choose such a $U$ later).
Apply \fullref{cor:newknots} to $U$ and $K$,
to obtain a new knot $K'$ and the $4\ell$-dimensional manifold $Z$, with intersection form $U$. As $U$ was $\Z[\Z]$-nonsingular, there is a $\Z[\Z]$-homology equivalence between $X_K$ and $X_{K'}$ that is the identity on the boundary and preserves meridians. We note that $\pi_{K'}=\Z\ltimes H$.

Applying \fullref{prop:Friedl}, we may calculate that
\begin{equation}\label{eq:wall}
\rho^{(2)}(M_{K'},\alpha^\Q)-\rho^{(2)}(M_{K},\alpha^\Q)=\dfrac{1}{2\pi}\int_{w\in S^1}\sgn(\eps_\omega(U)),
\end{equation}
where, recall,
\[
\eps_\omega\colon \im(\Phi_g\colon \Herm_{r\times r}(\Z[u,u^{-1}])\to \Herm_{r\times r}(\Z[\pi_K]))\to \Herm_{r\times r}(\C);\qquad g\mapsto \omega.
\]
As $K$ is doubly slice, by \fullref{thm:main}, we have $\rho^{(2)}(M_{K},\alpha^\Q)=0$.

Our next task is to choose $U$ with the properties specified above and so that the integral in \fullref{eq:wall} in nonzero. A second application of \fullref{thm:main} will then complete the proof that $K'$ is not doubly slice. We argue just as in \cite[\textsection 5]{MR1973052} that the matrix
\[
U:=\left(\begin{matrix} g-2 +g^{-1} & 1\\ 1 & g-2 +g^{-1}\end{matrix}\right)\in\Herm_{2\times 2}(\Z[\pi_K])
\]
works. First, $U=V+\overline{V}^T$, where $V$ is the matrix
\[
V:=\left(\begin{matrix} g-1 & 1\\ 0 & g-1\end{matrix}\right)\in \operatorname{Mat}_{2\times 2}(\Z[\pi_K]),
\]
so $U$ is even.
Next, because $g$ lies in $\pi_K^{(1)}$, $g$ is sent to $1$ under the abelianisation $\pi_K\to\Z\langle t\rangle$ and the matrix becomes the standard hyperbolic matrix, confirming $U$ is $\Z[\Z]$-nonsingular. The further map induced by augmentation $t\mapsto 1$, does not change the matrix, so the augmentation matrix has vanishing signature as required. It remains to calculate that the integral is nonzero. We have
\[
\det(\eps_\omega(U))= (\omega-2 +\omega^{-1})^2-1=(\omega-1 +\omega^{-1})(\omega-3 +\omega^{-1}),
\]
which is nonzero for $\omega\in S^1$ except at the primitive sixth roots of unity. The circle $S^1$ is separated into two arcs with boundary these roots of unity, and the signature $\sgn(\eps_\omega(U))$ is constant on each arc. The reader may easily check that $\sgn(\eps_1(U))=0$ and $\sgn(\eps_{-1}(U))=-2$. Thus
\[
\frac{1}{2\pi}\int_{\omega\in S^1}\sgn(\eps_\omega(U))=\frac{2}{3}\cdot (-2)+\frac{1}{3}\cdot 0=-\frac{4}{3}\neq 0.
\]
It follows that $K'$ is not doubly slice.

We next produce an infinite family of different knots using the construction above. For this, denote by $\Phi_m$ the $m$th cyclotomic polynomial and recall the well-known property of cyclotomic polynomials that if $m\neq 1$ and $m$ is not a prime power then $\Phi_m(1)=1$. So $\Phi_m(t)$ is an irreducible Alexander polynomial. Denote by $K(U,m)$ the non doubly slice knot obtained by applying the construction above to the matrix $U$ and the cyclotomic polynomial $m$ when $m\neq 1$ is not a prime power. Define
\[
\mathcal{F}=\{K(U,m)\mid \text{$m$ is divisible by at least 3 distinct primes}\}.
\]
This is the infinite family we seek. As noted above, we must have that the knot group of $K(U,m)$ is $\Z\ltimes \left(\Z[\Z]/\Phi_m\right)$, which shows that the knots in the family $\mathcal{F}$ are pairwise distinct.

Finally, we will show that Ruberman's Casson-Gordon invariants vanish for the knots in the family $\mathcal{F}$. The first stage in defining Ruberman's obstructions \cite[Theorem 2.2]{Ruberman-doubly-slice-I} for an $(4\ell-3)$-knot $K$ to be doubly slice is to first pass to some $r$-fold cyclic branched cover $\Sigma_r(K)$, where $r$ is a prime power (this is the ``$M$'' in the notation of \cite[Theorem 2.2]{Ruberman-doubly-slice-I}). Next one must find a homomorphism from $H_1(\Sigma_r(K))$ to a finite cyclic group $\Z/d\Z$. As the Alexander polynomials of the knots in $\mathcal{F}$ are, by construction, cyclotomic of order divisible by at least 3 distinct primes, we may apply \fullref{thm:livingston}. That shows that any homomorphism $H_1(\Sigma_r(K))\to\Z/d\Z$ is trivial for knots in $\mathcal{F}$. As a consequence,  Ruberman's invariants vanish for the knots in $\mathcal{F}$.
\end{proof}

%

\section{Abelian obstructions vanish on our knots}\label{section:first-order-obstructions}

In this section we recall the various notions of algebraic sliceness and algebraic double sliceness that have been defined using abelian coefficient systems. All the non doubly slice knots we constructed for \fullref{thm:family} will be algebraically doubly slice in the strongest sense we describe below. In~\cite[Proposition 4.11]{MR3604378}, the first author gave a strong version of algebraic double sliceness using the entire chain complex of the knot exterior as an obstruction. Another such condition, using the full cohomology ring of the knot exterior, was given in~\cite[Theorem A]{Levine-doubly-slice}. We will define a single strong algebraic double sliceness condition that implies all other previous versions and that is satisfied by all the non doubly slice knots in our family $\mathcal{F}$.

\subsection{Strong algebraic slices}

We briefly recall some elements of Ranicki's Algebraic Theory of Surgery \cite{MR560997,MR566491}, suppressing many details and highlighting only the relevant features for our purposes. Suppose $C_*$ is a finitely generated $\Z[\Z]$-module chain complex such that $C_r=0$ outside of the range $0\leq r\leq n$. An $n$-dimensional \emph{symmetric structure} is an equivalence class of collections of (higher) chain maps
\[
\phi=\{\phi_s\in\Hom_{\Z[\Z]}(C^{n-r+s},C_r)\mid r\in\Z,s\geq 0\},
\]
satisfying certain interrelations \cite[p.~7]{MR620795}. A \emph{symmetric complex} $(C,\phi)$ is called \emph{Poincar\'{e}} if the chain map
\[
\phi_0\colon C^{n-r}\to C_r,\qquad r\geq 0,
\]
  is a chain homotopy equivalence. Suppose $D$ is a finitely generated $\Z[\Z]$-module chain complex such that $D_r=0$ outside of the range $0\leq r\leq n+1$. Given a chain map $f\colon C\to D$, an $(n+1)$-dimensional \emph{symmetric structure} is an equivalence class of collections
\[
(\delta\phi,\phi)=\{(\delta\phi_s,\phi_s)\in\Hom_{\Z[\Z]}(D^{n+1-r+s},D_r)\oplus\Hom_{\Z[\Z]}(C^{n-r+s},C_r)\mid r\in\Z, s\geq 0\},
\]
satisfying certain interrelations \cite[p.~15]{MR620795} (that depend on $f$). A \emph{symmetric pair} $(f\colon C\to D,(\delta\phi,\phi))$ is called \emph{Poincar\'{e}} if the chain map
\[
(\delta\phi_0\,\,\phi_0)\colon \cone  (f)^{n+1-r}=D^{n+1-r}\oplus C^{n-r}\to D_r,\qquad r\geq 0
\]
is a chain homotopy equivalence. Two symmetric Poincar\'{e} pairs $(f_\pm\colon C\to D_\pm,(\delta_\pm\phi,\phi))$, indicated by `$+$' and `$-$',  are \emph{complementary} if
\[
\left(\begin{matrix} f_+\\f_-\end{matrix}\right)\colon C\to D_+\oplus D_-
\]
is a chain homotopy equivalence.

The \emph{Blanchfield complex} $(C_K,\phi_K)$ of an $n$-knot $K$ is constructed in~\cite[Chapter 7.8]{MR620795} using Ranicki's \emph{symmetric construction} \cite[Proposition 1.2]{MR566491}, and developed further in~\cite[Section 4]{MR3604378}. Briefly, $(C_K,\phi_K)$ is an $(n+2)$-dimensional symmetric Poincar\'{e} complex, such that the isomorphism class of $(C_K,\phi_K)$ is an invariant of the isotopy class of $K$. By construction there is a decomposition
\[
C_K\oplus C_*(\overline{D^{n+1}\times S^1})\simeq C_*(\overline{X_K}).
\]
So in particular there is an isomorphism in reduced homology $H_*(C_K;\Z[\Z])\cong \widetilde{H}_*(X_K;\Z[\Z])\cong \widetilde{H}_*(M_K;\Z[\Z])$, and similarly reduced cohomology. From this $C_K$ is seen to be $S$-acyclic.

\begin{definition}\label{def:strongalgslice} A \emph{strong algebraic slice} for an $n$-knot $K$ is a symmetric Poincar\'{e} pair $(f\colon C_K\to D,(\delta\phi_K,\phi_K))$, such that the image of the induced morphism on cohomology $f^*\colon H^*(D;\Z[\Z])\to H^*(C_K;\Z[\Z])$ is closed under cup product.

A knot that admits a strong algebraic slice is called \emph{strongly algebraically slice} and a knot that admits two  strong algebraic slices that are complementary as symmetric Poincar\'{e} pairs is called \emph{strongly algebraically doubly slice}.
\end{definition}

\begin{proposition}\label{prop:obvs}
A slice disc for $K$ determines a strong algebraic slice for $K$. Complementary slice discs for $K$ determine complementary strong algebraic slices for $K$.
\end{proposition}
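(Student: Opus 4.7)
The plan is to construct a strong algebraic slice directly from the chain complex of the slice disc exterior using Ranicki's symmetric construction, and then to verify the cup-product condition and the complementary condition by naturality.

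Given a slice disc $D$ for $K$, let $W_D := D^{n+3}\sm \nu D$ be its exterior. A tubular neighbourhood computation identifies $\partial W_D \cong M_K$, and Alexander duality gives $H_1(W_D;\Z)\cong \Z$, so there is a compatible infinite cyclic cover $\ol{W}_D\to W_D$ extending $\ol{M}_K \to M_K$. First I would apply Ranicki's symmetric construction \cite[Chapter 7.8]{MR620795} to the manifold pair $(W_D,M_K)$ to produce an $(n+3)$-dimensional symmetric Poincar\'e pair based on the inclusion $\iota\colon C_*(\ol{M}_K;{\Z[\Z]})\to C_*(\ol{W}_D;{\Z[\Z]})$, whose boundary symmetric structure recovers $(C_K,\phi_K)$ after splitting off the standard summand $C_*(\overline{D^{n+1}\times S^1})$. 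Splitting off the analogous trivial summand on the $W_D$-side then refines this to a symmetric Poincar\'e pair $(f\colon C_K \to D,(\delta\phi_K,\phi_K))$ of the form required by \fullref{def:strongalgslice}. To verify the cup-product closure condition, I would identify $H^*(C_K;{\Z[\Z]})\cong \wt H^*(M_K;{\Z[\Z]})$ and $H^*(D;{\Z[\Z]})\cong \wt H^*(W_D;{\Z[\Z]})$ via the splittings, under which $f^*$ is precisely the inclusion-induced restriction. Since restriction to the boundary is a ring homomorphism for cup product, its image is a subring of $H^*(C_K;{\Z[\Z]})$, which is the desired closure condition.

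For the complementary statement, let $D_+, D_-$ be complementary slice discs and write $W_\pm := W_{D_\pm}$. Then $W_+\cup_{X_K} W_-$ is the exterior of the unknotted $(n+1)$-sphere $D_+\cup_K D_-$ in $S^{n+3}$, so it is homeomorphic to $S^1\times D^{n+2}$ and has vanishing reduced ${\Z[\Z]}$-homology. Mayer--Vietoris with ${\Z[\Z]}$-coefficients then implies that
\[
\binom{\iota_+}{\iota_-}\colon C_*(\ol{M}_K;{\Z[\Z]}) \to C_*(\ol{W}_+;{\Z[\Z]})\oplus C_*(\ol{W}_-;{\Z[\Z]})
\]
is a ${\Z[\Z]}$-homology equivalence, hence a chain homotopy equivalence since the complexes are finite and finitely generated free. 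Passing to reduced quotients yields the chain homotopy equivalence $\binom{f_+}{f_-}\colon C_K \to D_+\oplus D_-$ demanded by complementarity.

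The main technical obstacle I expect is ensuring that the symmetric structures descend coherently through the chosen chain-level splittings (and in the complementary case match up on the shared boundary), so that all outputs are genuine symmetric Poincar\'e pairs in Ranicki's sense. Both verifications are ultimately careful bookkeeping within the algebraic framework of \cite[Chapter 7]{MR620795}, refined for the high-dimensional Blanchfield setting in \cite[Section 4]{MR3604378}, and I do not expect any essential new algebraic difficulty beyond tracking constructions already present there.
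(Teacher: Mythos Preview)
Your approach is essentially the same as the paper's: the authors also take the slice disc exterior $W$ with boundary $M_K$, invoke the symmetric construction (deferring to \cite[Proposition 4.11]{MR3604378} for the details you sketch), and obtain the cup-product closure from naturality of restriction. The paper's proof is terser and does not spell out the complementary case separately, so your expansion there is a welcome addition rather than a departure.

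One technical slip in your complementary argument: the Mayer--Vietoris decomposition glues $W_+$ and $W_-$ along $X_K$, not $M_K$, and in any case the \emph{unreduced} map $C_*(\ol{X}_K;\Z[\Z]) \to C_*(\ol{W}_+;\Z[\Z])\oplus C_*(\ol{W}_-;\Z[\Z])$ is not a homology equivalence in degree~$0$ (you get $\Z\to\Z\oplus\Z$ there, with cokernel~$\Z$ matching $H_0$ of the unknot exterior). So you cannot conclude the unreduced map is a chain homotopy equivalence and then pass to quotients; rather you should pass to the reduced complexes first and observe that \emph{there} Mayer--Vietoris gives a homology equivalence, hence a chain homotopy equivalence of free complexes. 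This is a bookkeeping fix, not a gap in the strategy.
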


\begin{proof} Let $D$ be a slice disc for $K$. The slice disc exterior $W:=D^{n+3}\sm \nu D$ has boundary the surgery manifold $M_K$. We refer the reader to~\cite[Proposition 4.11]{MR3604378} for the proof that $(W,M_K)$ determines an algebraic nullbordism for $(C_K,\phi_K)$ as required. The condition on the cup product is also immediate from the construction in that proof and naturality of the cup product under inclusion.
\end{proof}

\subsection{The Blanchfield pairing}

For the reader's convenience we recall a standard description of the Blanchfield pairing. The description is based on~\cite[\textsection 1--4]{Levine-77-knot-modules}.

\begin{proposition}\label{prop:blanchfieldwithcxs}Suppose that $f\colon C\to D$ is a map of finitely generated $\Z[\Z]$-module chain complexes that are $S$-acyclic. Suppose that $(f\colon C\to D,(\delta\phi,\phi))$ is an $(m+1)$-dimensional Poincar\'{e} pair. Then for $1<i<m$ there is a nonsingular pairing\[Bl \colon FH^{i}(D;{\Z[\Z]})\times FH^{m-i+1}(D,C;{\Z[\Z]})\to {\Q(\Z)}/{\Z[\Z]}.\]
\end{proposition}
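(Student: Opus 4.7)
The plan is to imitate the classical Blanchfield pairing construction of Levine [Levine-77-knot-modules, §1--4], suitably adapted to the abstract setting of a symmetric Poincaré pair. Two ingredients are needed: the Poincaré duality from the symmetric structure, and a Bockstein isomorphism obtained from $S$-acyclicity.

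First I would observe that the long exact sequence of the pair, together with the given $S$-acyclicity of $C$ and $D$, shows $(D,C)$ is also $S$-acyclic; in particular $H_*(-;\Q(\Z))=0$ and $H^*(-;\Q(\Z))=0$ for $-\in\{C,D,(D,C)\}$. The Bockstein long exact sequence associated to $0\to \Z[\Z]\to \Q(\Z)\to \Q(\Z)/\Z[\Z]\to 0$ then collapses to give natural isomorphisms
\[\delta \colon H^{r-1}(-;\Q(\Z)/\Z[\Z]) \xrightarrow{\cong} H^r(-;\Z[\Z]),\]
and dually in homology, since the surrounding $\Q(\Z)$-coefficient terms vanish. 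I would also use the symmetric Poincaré structure $(\delta\phi_0,\phi_0)$, which supplies chain-level duality equivalences from which I extract isomorphisms $FH^{m-i+1}(D,C;\Z[\Z])\cong FH_i(D;\Z[\Z])$ via cap product with the fundamental class of the pair, and similarly $FH^i(D;\Z[\Z])\cong FH_{m+1-i}(D,C;\Z[\Z])$ via a standard five-lemma argument combining this with Poincaré duality for the boundary $C$.

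With these tools in hand I would define $Bl$ at the chain level, following Levine. Given cocycle representatives $\xi$ of $x\in FH^i(D;\Z[\Z])$ and $\eta$ of $y\in FH^{m-i+1}(D,C;\Z[\Z])$, first translate $\eta$ via the PD equivalence to a cycle representative $y'\in C_i(D;\Z[\Z])$; next use $S$-acyclicity to choose $p\in S$ and $\gamma\in C_{i+1}(D;\Z[\Z])$ with $p\cdot y'=\partial\gamma$. Then $Bl(x,y)\in \Q(\Z)/\Z[\Z]$ is defined by pairing $\xi$ (after passing through the Bockstein to $\Q(\Z)/\Z[\Z]$-coefficients) against the rational chain $\gamma/p$, interpreted modulo $\Z[\Z]$. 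Verifying independence of the choices of $\xi$, $\eta$, $p$, $\gamma$ is a standard diagram chase using the Bockstein and PD isomorphisms.

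Nonsingularity of the resulting pairing follows by noting that each of the PD, Bockstein, and adjunction identifications is an isomorphism on the $F$-parts, so the composite pairing is adjoint to an isomorphism. The main obstacle I expect is the technical point that $\Z[\Z]$ is a UFD of global dimension two but not a PID, so the universal coefficient spectral sequence contains a potentially nontrivial $\Ext^2_{\Z[\Z]}$ contribution; one must verify that all such contributions are $\Z$-torsion, so that they vanish after applying $F$. This is precisely the content of the analysis in [Levine-77-knot-modules, §1--4], which carries over verbatim to the abstract symmetric Poincaré pair setting since only formal properties of $\Z[\Z]$-module chain complexes and the duality equivalence are used.
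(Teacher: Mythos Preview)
Your approach is essentially the paper's: both adapt Levine's classical construction, combining Poincar\'e--Lefschetz duality supplied by the symmetric structure with the coefficient sequence $0\to\Z[\Z]\to\Q(\Z)\to\Q(\Z)/\Z[\Z]\to 0$, and both identify the key technical point as the $\Ext^2_{\Z[\Z]}$ term in the universal coefficient sequence being $\Z$-torsion and hence killed by~$F$. The only difference is packaging. The paper avoids any chain-level formula and instead defines $Bl$ as the adjoint of an explicit composite of four isomorphisms passing through $\Ext^1_{\Z[\Z]}(FH_{i-1}(D),\Z[\Z])\cong\Hom_{\Z[\Z]}(FH_{i-1}(D),\Q(\Z)/\Z[\Z])$, so that nonsingularity is automatic once each arrow is checked. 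Your chain-level recipe has a degree slip --- as written you evaluate a degree-$i$ cocycle $\xi$ against a degree-$(i{+}1)$ chain $\gamma/p$, which does not typecheck --- and sorting this out forces you through exactly the $\Ext^1$ identification the paper uses.
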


The proof of \fullref{prop:blanchfieldwithcxs} is based on the following proposition from \cite[\textsection 2]{Levine-77-knot-modules}.

\begin{proposition}[Levine]\label{prop:extsequence}Suppose $R$ and $S$ are rings with unit, and $R$ has homological dimension 2 $($that is, any finitely generated module over $R$ has a length~2 resolution by projective $R$-modules$)$. Suppose $X$ is a projective left $R$-module chain complex  and $G$ is an $(R,S)$-bimodule. If $\Hom_R(H_p(X;R),G)=0$ for all $p\in\Z$, then for all $r\in\Z$ there is a short exact sequence of right $S$-modules \[0\to\Ext^2_R(H_{r-2}(X;R),G)\to H^r(X;G)\to \Ext^1_R(H_{r-1}(X;R),G)\to 0.\]
\end{proposition}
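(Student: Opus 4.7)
The strategy is to set up a Cartan--Eilenberg hyperext spectral sequence and observe that, under the given hypotheses, it is forced to degenerate at $E_2$ into exactly the claimed short exact sequence. Concretely, I would choose a Cartan--Eilenberg projective resolution $P_{*,*} \to X_*$ (a double complex whose rows resolve the $X_i$, their cycles, boundaries and homology simultaneously), apply $\Hom_R(-,G)$ to the total complex, and use the two standard filtrations of the resulting double cochain complex of right $S$-modules. Because each $X_i$ is already projective, the total $\Hom$-complex computes $H^*(\Hom_R(X,G)) = H^*(X;G)$, and the column-first spectral sequence has $E_2$ page
\[
E_2^{p,q} = \Ext^p_R\bigl(H_q(X;R),G\bigr) \Longrightarrow H^{p+q}(X;G), \qquad d_r \colon E_r^{p,q} \to E_r^{p+r,\,q-r+1}.
\]

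Both hypotheses then manifest as vanishing statements on the $E_2$ page. The homological dimension $2$ hypothesis forces $E_2^{p,q} = 0$ for $p \geq 3$, and the assumption $\Hom_R(H_*(X;R),G) = 0$ kills the column $p = 0$. Hence $E_2^{p,q}$ is concentrated in the two columns $p = 1$ and $p = 2$. Every higher differential $d_r$ with $r \geq 2$ either has source in $\{p=1,2\}$ and target in the zero region $p \geq 3$, or has target in $\{p=1,2\}$ and source in the zero region $p \leq 0$; in either case it vanishes. So the spectral sequence degenerates at $E_2$. The resulting two-step filtration on $H^r(X;G)$ has $E_\infty^{2,\,r-2} = \Ext^2_R(H_{r-2}(X;R),G)$ in lowest filtration, hence as a submodule, and $E_\infty^{1,\,r-1} = \Ext^1_R(H_{r-1}(X;R),G)$ as the quotient; this is exactly the short exact sequence claimed. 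The two-column support also means the filtration has length at most two, so convergence holds unconditionally, without a boundedness hypothesis on $X$.

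The main technical point to watch is really bookkeeping: I must fix conventions for the Cartan--Eilenberg spectral sequence so that the differentials $d_r$ point in the stated direction and, crucially, so that the $\Ext^2$ term lands as a submodule rather than as a quotient. This can be calibrated by specialising to $R = \Z$ (homological dimension $1$), where the proposition recovers the classical universal coefficient theorem
\[
0 \to \Ext^1_\Z(H_{r-1}(X),G) \to H^r(X;G) \to \Hom_\Z(H_r(X),G) \to 0,
\]
in which higher-Ext of earlier homology appears as the submodule. Transporting this pattern verbatim to the dimension-$2$ setting (with $\Hom$ killed by hypothesis and $\Ext^2$ now available) yields the statement. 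No further input is needed: the argument is entirely a vanishing-plus-degeneration computation on the universal coefficient spectral sequence.
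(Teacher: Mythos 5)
The paper does not prove this proposition itself: it is quoted from Levine \cite[\S 2]{Levine-77-knot-modules}, so there is no in-paper argument to compare against. Your proof via the universal coefficient spectral sequence of a Cartan--Eilenberg resolution is correct and is the standard argument for statements of this shape; the two-column $E_2$ page forces collapse, and the bounded (length-two) filtration places $\Ext^2$ as the submodule and $\Ext^1$ as the quotient, exactly as claimed. One small point worth being explicit about: the parenthetical ``homological dimension~2'' in the statement is a condition on \emph{finitely generated} modules, so to conclude $E_2^{p,q}=\Ext^p_R(H_q(X;R),G)=0$ for $p\ge 3$ (and to truncate the Cartan--Eilenberg resolution to a strip $0\le p\le 2$, which is what makes convergence unconditional) you should note that the $H_q(X;R)$ are finitely generated --- automatic in the paper's setting since $X$ is a finitely generated complex over the Noetherian ring $\Z[\Z]$, but used implicitly in your degeneration step.
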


\begin{proof}[Proof of \fullref{prop:blanchfieldwithcxs}] To prove the proposition, we will describe a sequence of four right ${\Z[\Z]}$-module morphisms and show they are all isomorphisms. The pairing will then be defined as the adjoint of the composed maps in the sequence. This description will also show that the pairing is nonsingular.

We will describe and analyse the maps in the sequence:
\[
FH^{i}(D;{\Z[\Z]})\xrightarrow{UCT}\Ext^1_{\Z[\Z]}(H_{i-1}(D;{\Z[\Z]}),{\Z[\Z]})\cong\hspace{4cm}
\]
\[
\qquad\qquad \Ext^1_{\Z[\Z]}(FH_{i-1}(D;{\Z[\Z]}),{\Z[\Z]})\xleftarrow{\cong}\Hom_{\Z[\Z]}(FH_{i-1}(D;{\Z[\Z]}),{\Q(\Z)}/{\Z[\Z]})
\]
\[
\hspace{2cm}\xleftarrow{PD} \Hom_{\Z[\Z]}(FH^{m-i+1}(D,C;{\Z[\Z]}),{\Q(\Z)}/{\Z[\Z]}).
\]

For the first map, note ${H}_*(D;{\Q(\Z)})=0$ implies that $\Hom_{\Z[\Z]}(H_p(D;\Z[\Z]),\Z[\Z])=0$ for all $p\in\Z$. Then as ${\Z[\Z]}$ has homological dimension 2 we may apply  \fullref{prop:extsequence} to the case that $R={\Z[\Z]}$, $X=D$, and $G={\Z[\Z]}$ considered as a $({\Z[\Z]},{\Z[\Z]})$-bimodule. This results in exact sequences of right ${\Z[\Z]}$-modules for all $r\in\Z$:
\begin{align*}
0\to \Ext^2_{\Z[\Z]}({H}_{r-2}(D;\Z[\Z]),{\Z[\Z]})&\to {H}^r(D;{\Z[\Z]})\\
&\to \Ext^1_{\Z[\Z]}({H}_{r-1}(D;{\Z[\Z]}),{\Z[\Z]})\to0.
\end{align*}
For a general ${\Z[\Z]}$-module $A$, $F\Ext^2_{\Z[\Z]}(A,{\Z[\Z]})=0$ and $F\Ext^1_{\Z[\Z]}(A,{\Z[\Z]})=\Ext^1_{\Z[\Z]}(A,{\Z[\Z]})$; see \cite[3.2, 3.3]{Levine-77-knot-modules}. Hence by applying the functor $F$ to our exact sequence, and setting $r=i$, we obtain the first of our claimed isomorphisms in the sequence, called $UCT$.

The second isomorphism in the sequence follows from another general fact about ${\Z[\Z]}$-modules $A$, that $\Ext^1_{\Z[\Z]}(A,{\Z[\Z]})=\Ext^1_{\Z[\Z]}(FA,{\Z[\Z]})$.

The fourth isomorphism in our sequence follows from considering the long exact $\Ext$ sequence associated to the change of rings exact sequence $0\to {\Z[\Z]}\to {\Q(\Z)}\to {\Q(\Z)}/{\Z[\Z]}\to 0$, applied to a ${\Z[\Z]}$-module $A$:
\begin{align*}
\dots\to\Hom_{\Z[\Z]}(A,{\Q(\Z)})&\to \Hom_{\Z[\Z]}(A,{\Q(\Z)}/{\Z[\Z]})\\
&\to\Ext^1_{\Z[\Z]}(A,{\Z[\Z]})\to\Ext^1_{\Z[\Z]}(A;{\Q(\Z)})\to \cdots
\end{align*}
But setting $A=FH_{i-1}(Y,\partial_1 Y)$, the outer two terms vanish because $A$ is ${\Z[\Z]}$-torsion. The central map is then the next isomorphism we seek.

The final map is induced by using the $0$th chain map in the collection $(\theta,\phi\oplus-\phi)$. Namely it is the Poincar\'{e}-Lefschetz duality chain map $\left(\begin{smallmatrix}\delta\phi_0\\\pm \phi_0 f^*\end{smallmatrix}\right)\colon D^{n+1-*}\to\cone  (f)_*$.
This is an isomorphism from cohomology to homology by hypothesis, as the pair $(f\colon C\to D,(\delta\phi,\phi))$ was assumed to be Poincar\'{e}. Thus, when we apply the functor $F$ it is still an isomorphism, and the result follows.
\end{proof}

\begin{corollary}\label{cor:blanchfield}Suppose $(Y,\partial Y)$ is an oriented $m$-dimensional manifold with $($possibly empty$)$ boundary, together with a homomorphism $\pi_1(Y)\to\Z$ defining homology with ${\Z[\Z]}$ coefficients. Suppose further that the reduced homology, $\widetilde{H}_i(Y;{\Z[\Z]})$, $\widetilde{H}_i(\partial Y;{\Z[\Z]})$ is ${\Z[\Z]}$-torsion, and that multiplication by $1-t$ is an isomorphism on these modules. Then for $0<i<m-1$ there is a nonsingular pairing\[Bl \colon FH^{i}(Y;{\Z[\Z]})\times FH^{m-i+1}(Y,\partial Y;{\Z[\Z]})\to {\Q(\Z)}/{\Z[\Z]}.\]
\end{corollary}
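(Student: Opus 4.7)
The plan is to deduce this corollary as a direct consequence of Proposition~4.4 (\texttt{prop:blanchfieldwithcxs}), by producing from the manifold $(Y,\partial Y)$ a symmetric Poincar\'e pair to which that result applies.

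First I would set $C := \widetilde{C}_*(\partial Y;{\Z[\Z]})$ and $D := \widetilde{C}_*(Y;{\Z[\Z]})$, the reduced cellular chain complexes with ${\Z[\Z]}$-coefficients, and take $f \colon C \to D$ to be the chain map induced by the inclusion $\partial Y \hookrightarrow Y$. Both $C$ and $D$ are finitely generated over $\Z[\Z]$ by the existence of a finite handle decomposition of a compact topological manifold. Since multiplication by $1-t$ is an automorphism of the reduced homologies $\widetilde H_*(Y;\Z[\Z])$ and $\widetilde H_*(\partial Y;\Z[\Z])$ by hypothesis, the criterion recalled just after the definition of $S$ (i.e.\ that a finitely generated $\Z[\Z]$-module is $S$-torsion iff $1-t$ acts invertibly) shows that $C$ and $D$ are $S$-acyclic in the sense required by Proposition~4.4.

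Next I would equip the pair $(f\colon C\to D)$ with an $m$-dimensional symmetric Poincar\'e structure $(\delta\phi,\phi)$ coming from Ranicki's symmetric construction applied to the oriented topological manifold with boundary $(Y,\partial Y)$; this is the same construction used to define the Blanchfield complex $(C_K,\phi_K)$ in Section~4, specialised to the manifold-with-boundary setting. The Poincar\'e structure packages cap product with the fundamental class of $Y$, so the chain equivalences $\phi_0\colon D^{m-r}\to D_r$ and $(\delta\phi_0\ \phi_0)\colon D^{m-r}\oplus C^{m-1-r}\to D_r$ are exactly Poincar\'e-Lefschetz duality for $(Y,\partial Y)$ with $\Z[\Z]$-coefficients. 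Now Proposition~4.4, applied with its dimension parameter $m+1$ set to the present $m$, gives the nonsingular pairing
\[
Bl \colon FH^{i}(D;{\Z[\Z]})\times FH^{m-i+1}(D,C;{\Z[\Z]})\to {\Q(\Z)}/{\Z[\Z]}
\]
for $1 < i < m-1$. Identifying the cohomology of $(D,C)$ with the (reduced, then unreduced) cohomology of $(Y,\partial Y)$ in positive degrees recovers the pairing in the statement.

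To pick up the remaining endpoint $i=1$ of the stated range $0<i<m-1$, I would run the proof of Proposition~4.4 directly once more in degree~$1$: the $S$-acyclicity of the reduced complex $D$ forces $\Hom_{\Z[\Z]}(\widetilde H_0(Y;\Z[\Z]),\Z[\Z])=0$, so the universal coefficient sequence still collapses to an isomorphism $FH^1(Y;\Z[\Z])\cong \Ext^1_{\Z[\Z]}(\widetilde H_0(Y;\Z[\Z]),\Z[\Z])$, and $FH^m(Y,\partial Y;\Z[\Z])\cong \widetilde H_0(Y;\Z[\Z])$ by the reduced Poincar\'e-Lefschetz chain equivalence, after which the remaining two isomorphisms in the sequence of Proposition~4.4 go through verbatim. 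The main obstacle I expect is precisely this bookkeeping of the reduced versus unreduced chain complexes: one must verify that the symmetric Poincar\'e pair structure on the \emph{reduced} complexes (with their degree-zero terms trimmed) is still well-defined and gives the duality chain equivalences needed in Proposition~4.4, rather than just working with unreduced chains where $H_0(Y;\Z[\Z])$ fails to be $S$-torsion and the first isomorphism in the chain of maps breaks.
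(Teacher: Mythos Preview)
Your approach is essentially the same as the paper's: set $C=\widetilde C_*(\partial Y;\Z[\Z])$, $D=\widetilde C_*(Y;\Z[\Z])$, check $S$-acyclicity via the $(1-t)$ criterion, endow $(f\colon C\to D)$ with a symmetric Poincar\'e pair structure from Ranicki's construction, and invoke \fullref{prop:blanchfieldwithcxs}.

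The one point where the paper is more explicit is exactly the obstacle you flag at the end. Rather than applying the symmetric construction directly to $(Y,\partial Y)$ and then worrying about trimming degree zero, the paper uses the classifying map $Y\to S^1$ for the homomorphism $\pi_1(Y)\to\Z$, thickens it to a degree-one map of pairs
\[
(Y,\partial Y)\longrightarrow (S^1\times D^{m-1},\,S^1\times S^{m-2}),
\]
and applies Ranicki's symmetric construction to this map of pairs. The reduced complexes $C$ and $D$ then arise as the surgery kernel complexes of this degree-one map, and the Poincar\'e pair structure on them is produced directly by the relative symmetric construction (just as the Blanchfield complex $(C_K,\phi_K)$ is produced). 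This cleanly sidesteps the reduced/unreduced bookkeeping you anticipate, and is worth incorporating into your write-up.
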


\begin{proof}
The homomorphism $\pi_1(Y)\to\Z$, determines a map $Y\to K(\Z,1)=S^1$. This $S^1$ may be thickened so that there is a commutative diagram of spaces
\[
\begin{tikzcd}
\partial Y\arrow[r,"f"]\ar[d]&Y\ar{d}\\
S^1\times S^{m-2}\arrow[r]&S^1\times D^m
\end{tikzcd}
\]
where the vertical maps may be assumed to be degree one. Apply Ranicki's symmetric construction to the diagram to obtain $C:=\widetilde{C}_*(\partial Y;\Z[\Z])$ and $D:=\widetilde{C}_*(Y;\Z[\Z])$, together with symmetric structures. By hypothesis, $C$ and $D$ are torsion with respect to $\Z[t,t^{-1}]$ and $(1-t)$ acts as an automorphism. Together, these two conditions are equivalent to being torsion with respect to the set $S$ of Alexander polynomials. So the hypotheses of \fullref{prop:extsequence} are satisfied and the result follows.
\end{proof}

\begin{definition} The pairing obtained in the proof of \fullref{cor:blanchfield} is called the \emph{Blanchfield pairing} of $(Y,\partial Y)$, or simply the \emph{Blanchfield pairing of $Y$} if $\partial Y=\emptyset$.
\end{definition}

\begin{theorem}\label{thm:A-B-submodules} Given a strong algebraic slice $(f\colon C_K\to D,(\delta\phi_K,\phi_K))$ for an $n$-knot $K$, the images $A^i:=\im(f^*\colon H^i(D;\Z[\Z])\to H^i(C_K;\Z[\Z]))$ are such that $Bl(FA^i\times FA^{n+3-i})=0$.
\end{theorem}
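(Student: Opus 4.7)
The desired vanishing is the algebraic manifestation of the classical topological principle that a slice disc gives a Lagrangian for the Blanchfield pairing; in the symmetric $L$-theory framework, a symmetric Poincar\'e nullbordism of a Poincar\'e complex produces a Lagrangian in its associated linking form. The plan is to reduce the computation of $Bl(x,y)$ for $x\in FA^i$, $y\in FA^{n+3-i}$ to an intersection pairing inside the pair $(D,C_K)$ taking values in $\Z[\Z]$, and hence zero modulo $\Z[\Z]$.

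First I would make the Blanchfield pairing on $(C_K,\phi_K)$ explicit at the chain level, following the identifications in the proof of \fullref{prop:blanchfieldwithcxs} but now applied to the closed $(n+2)$-dimensional symmetric Poincar\'e complex $(C_K,\phi_K)$ itself. Given cocycles $\hat x\in C_K^i$, $\hat y\in C_K^{n+3-i}$ representing $x,y$, and $\sigma\in\Z[\Z]$ such that $\sigma\hat x=d^*u$ for some $u\in C_K^{i-1}$, one has
\begin{equation*}
Bl(x,y)\;=\;\tfrac{1}{\sigma}\bigl\langle u,\phi_{K,0}(\hat y)\bigr\rangle \in \Q(\Z)/\Z[\Z],
\end{equation*}
where $\langle-,-\rangle$ is the Kronecker pairing of a cochain with a chain of the same degree.

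Next I would use the pair structure to realise this expression as an intersection number in $D$. Writing $\hat x=f^*(\hat x')$ and $\hat y=f^*(\hat y')$ for cocycles in $D$, the defining relations of a symmetric Poincar\'e pair imply $\partial\,\delta\phi_{K,0}(\hat y')=\pm f_*\,\phi_{K,0}(\hat y)$ (up to chain homotopies involving the higher $\delta\phi_{K,s}$). Setting $X:=\delta\phi_{K,0}(\hat x')\in D_{n+3-i}$ and $Y:=\delta\phi_{K,0}(\hat y')\in D_i$, the total degree is $n+3=\dim(D,C_K)$, so the pair admits a well-defined intersection number $X\cdot_D Y\in\Z[\Z]$ computed via $\langle\hat x'\cup\hat y',\delta\phi_{K,0}\rangle$. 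A chain-level manipulation, using $\langle d^*u,-\rangle=\langle u,\partial(-)\rangle$ together with the boundary identity for $Y$, rewrites $\tfrac{1}{\sigma}\langle u,\phi_{K,0}(\hat y)\rangle$ as $\pm X\cdot_D Y$ modulo $\Z[\Z]$; this proves $Bl(x,y)=0$ in $\Q(\Z)/\Z[\Z]$.

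The main technical obstacle is this last rewriting: the cochain $u$ lives in the boundary $C_K$, while the intersection must be carried out in the bulk $D$, so one has to use the higher components of the pair data $(\delta\phi_{K,s},\phi_{K,s})_{s\ge 0}$ to supply the chain homotopies that convert the boundary Kronecker pairing into the bulk intersection number. This is the algebraic core of the statement that nullbordant symmetric Poincar\'e complexes have trivial Witt class in the associated linking-form Witt group. I note finally that the cup-product-closed condition in \fullref{def:strongalgslice} is not invoked for this theorem: only the underlying symmetric Poincar\'e pair structure is needed to force the vanishing of $Bl$ on $\im(f^*)$.
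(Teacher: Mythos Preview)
Your approach is genuinely different from the paper's. The paper does not compute at the chain level at all: it builds a three-column ladder
\[
\begin{tikzcd}
FH^{i}(D)\ar[r]\ar[d,"Bl"'] & FH^{i}(C_K)\ar[r]\ar[d,"Bl"'] & FH^{i+1}(D,C_K)\ar[d,"Bl"']\\
(FH^{n+4-i}(D,C_K))^\wedge\ar[r] & (FH^{n+3-i}(C_K))^\wedge\ar[r] & (FH^{n+3-i}(D))^\wedge
\end{tikzcd}
\]
whose rows come from the long exact sequence of the pair and whose verticals are the adjoints of the Blanchfield pairings of \fullref{prop:blanchfieldwithcxs}. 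Commutativity is checked by decomposing each $Bl$ into its four constituent isomorphisms; only the UCT square needs work, handled by the naturality statement \fullref{lem:UCT}. The vanishing $Bl(FA^i\times FA^{n+3-i})=0$ then falls out of a standard diagram chase. This is clean and avoids all cochain-level bookkeeping; your route, by contrast, is the explicit linking-form computation and gives more geometric insight into \emph{why} the image is isotropic.

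Your sketch has one concrete gap, and it is not where you think it is. You correctly isolate the problem: $u$ lives in $C_K^{i-1}$ while the boundary identity $\partial\,\delta\phi_{K,0}(\hat y')=\pm f_*\phi_{K,0}(\hat y)$ lives in $D$. But the higher homotopies $(\delta\phi_{K,s})_{s\ge1}$ do not bridge this; the $s=0$ relation already gives exactly the displayed identity when $\hat y'$ is a cocycle, and the higher $s$ contribute nothing further here. What you actually need is to choose $u$ in the image of $f^*$ from the outset. Since $D$ is $S$-acyclic (this is part of the standing hypotheses, used implicitly when the paper invokes \fullref{prop:blanchfieldwithcxs}), the class $[\hat x']\in H^i(D;\Z[\Z])$ is $S$-torsion, so there exist $\sigma'\in S$ and $u'\in D^{i-1}$ with $d_D^*u'=\sigma'\hat x'$. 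Taking $u:=f^*u'$ gives $d_{C_K}^*u=\sigma'\hat x$, and now the adjunction $\langle f^*u',\,\cdot\,\rangle_{C_K}=\langle u',\,f_*(\cdot)\rangle_D$ together with the $s=0$ relation yields
\[
\tfrac{1}{\sigma'}\langle u,\phi_{K,0}(\hat y)\rangle
=\pm\tfrac{1}{\sigma'}\langle u',\,d_D\delta\phi_{K,0}(\hat y')\rangle
=\pm\tfrac{1}{\sigma'}\langle d_D^*u',\,\delta\phi_{K,0}(\hat y')\rangle
=\pm\langle \hat x',\,\delta\phi_{K,0}(\hat y')\rangle\in\Z[\Z],
\]
which is the intersection number you wanted. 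With this correction your argument goes through.

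Your final remark is correct: neither your argument nor the paper's uses the cup-product closure clause from \fullref{def:strongalgslice}.
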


In the proof of Theorem~\ref{thm:A-B-submodules}, we will use the following lemma, which is a straightforward functoriality consequence of the construction of the sequence in \fullref{prop:extsequence}. We omit the proof, which is standard homological algebra.

\begin{lemma}\label{lem:UCT}Let $R$, $S$, $G$ be as in \fullref{prop:extsequence}. Let $f\colon X\to Y$ be a morphism of projective left $R$-module chain complexes with $\Hom_R(H_p(X;R),G)=0=\Hom_R(H_p(Y;R),G)$ for all $p\in\Z$. The following is a commutative diagram for all $r\in\Z$
\[
\begin{tikzcd}
0\to \Ext^2_R(H_{r-2}(Y;R),G)\ar{r} \ar{d}{\Ext^2_R(f_*,G)} & H^{r}(Y;R) \ar{r} \ar{d}{f^*} & \Ext^1_R(H_{r-1}(Y;R),G) \ar{d}{\Ext^1_R(f_*,G)}\to 0\\
0\to \Ext^2_R(H_{r-2}(X;R),G)\ar{r} & H^{r}(X;R) \ar{r} & \Ext^1_R(H_{r-1}(X;R),G)\to 0
\end{tikzcd}
\]
where the rows are the short exact sequences of  \fullref{prop:extsequence}.
\end{lemma}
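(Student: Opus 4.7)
The plan is to prove the lemma as a naturality statement for the hyper-Ext spectral sequence that already underlies \fullref{prop:extsequence}. Concretely, for any projective left $R$-module chain complex $X$ one may choose a Cartan--Eilenberg resolution $P_{*,*}\to X$ by projective $R$-modules. The total complex of $\Hom_R(P_{*,*},G)$ computes $H^*(X;G)$, and filtering by columns yields the first-quadrant spectral sequence
\[
E_2^{p,q}=\Ext^p_R(H_q(X;R),G)\;\Longrightarrow\;H^{p+q}(X;G).
\]
The hypothesis $\Hom_R(H_q(X;R),G)=0$ kills the column $p=0$, while the assumption that $R$ has homological dimension $2$ kills the columns $p\geq 3$. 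Thus only the columns $p\in\{1,2\}$ are nonzero, the spectral sequence degenerates at $E_2$, and the associated two-step filtration of $H^r(X;G)$ yields exactly the short exact sequence appearing in \fullref{prop:extsequence}.

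To obtain the commutative diagram, fix Cartan--Eilenberg resolutions $P_{*,*}\to X$ and $Q_{*,*}\to Y$. By the standard comparison theorem for Cartan--Eilenberg resolutions, the chain map $f\colon X\to Y$ lifts to a morphism $\tilde f\colon P_{*,*}\to Q_{*,*}$ of double complexes, unique up to chain homotopy. Applying $\Hom_R(-,G)$ produces a morphism of the associated total complexes, hence a morphism between their column-filtration spectral sequences. Since the vertical homology of a Cartan--Eilenberg resolution is, by construction, a projective resolution of the corresponding homology module, and since $\tilde f$ descends to $f_*\colon H_q(X;R)\to H_q(Y;R)$ on this vertical homology, the induced morphism on the $E_2$-page is precisely $\Ext^p_R(f_*,G)$. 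The induced map on the abutment is $f^*\colon H^r(Y;G)\to H^r(X;G)$, and because $E_\infty=E_2$ the induced morphism on associated gradeds of the two-step filtration is compatible with the short exact sequence; this is the commutative diagram in the statement.

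The main obstacle is genuinely mild and is essentially bookkeeping: one must check that the short exact sequence extracted from the spectral sequence is the same as the one constructed in the proof of \fullref{prop:extsequence}. If that proposition is itself proved via the same spectral sequence then this is automatic; otherwise one identifies the two sequences by noting that both are determined by the filtration on $H^r(X;G)$ by $\Ext$-degree, so any two incarnations of the sequence are naturally isomorphic, and naturality in $f$ transports along this identification.
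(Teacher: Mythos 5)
The paper omits its own proof of this lemma, noting only that it is ``standard homological algebra.'' Your argument --- building the hyper-$\Ext$ spectral sequence $E_2^{p,q}=\Ext^p_R(H_q(-;R),G)\Rightarrow H^{p+q}(-;G)$ from a Cartan--Eilenberg resolution, observing that the hypotheses concentrate $E_2$ in columns $p=1,2$ so that it degenerates, and then invoking the comparison theorem for Cartan--Eilenberg resolutions to get naturality of the resulting two-step filtration --- is precisely the standard argument being alluded to (it is also the mechanism behind Levine's original construction in \cite[\S 2]{Levine-77-knot-modules}), and it is correct.
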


\begin{proof}[Proof of \fullref{thm:A-B-submodules}]

The pair $(f\colon C_K\to D,(\delta\phi_K,\phi_K))$ satisfies the hypotheses of \fullref{prop:blanchfieldwithcxs} (with $C'=0$). For any finitely generated ${\Z[\Z]}$-module $T$, define $T^\wedge:=\Hom_{\Z[\Z]}(T,{\Q(\Z)}/{\Z[\Z]})$. The following diagram has exact rows coming from the long exact sequence of the map $f\colon C_K\to D$ and vertical maps given by the adjoints to the various Blanchfield pairings, which are all isomorphisms:
\[
\begin{tikzcd}
FH^{i}(D;{\Z[\Z]})\arrow[r]\arrow[d, "Bl", "\cong"']
&FH^{i}(C;{\Z[\Z]})\arrow[r]\arrow[d, "Bl", "\cong"']
&FH^{i+1}(D,C;{\Z[\Z]})\arrow[d, "Bl", "\cong"']\\
(FH^{n+4-i}(D,C;{\Z[\Z]}))^\wedge\ar{r}
&(FH^{n+3-i}(C;{\Z[\Z]}))^\wedge\ar{r}
&(FH^{n+3-i}(D;{\Z[\Z]}))^\wedge
\end{tikzcd}
\]If the diagram is commutative, it is a standard diagram chase to show that $Bl(FA^i\times FA^{n+3-i})=0$.

To see that the diagram of Blanchfield maps above commutes, recall that the map $Bl$ from ~\fullref{prop:blanchfieldwithcxs} was constructed as a composite of four isomorphisms. The diagram of Blanchfield maps above can thus be decomposed using these four isomorphisms into a diagram with five rows (and three columns). Each row is a different exact sequence coming from the map $f\colon C_K\to D$. The diagram of Blanchfield maps commutes if the decomposed diagram commutes. But the only part of this decomposed diagram that is not well known to commute is the part which comes from the $UCT$ map. This part can be seen to commute by applying the functor $F$ to the diagram in  \fullref{lem:UCT}.
\end{proof}

By \fullref{cor:blanchfield}, for $k>1$, a $(2k-3)$-dimensional knot $K$ determines a middle-dimensional Blanchfield pairing
\[
Bl\colon FH^{k}(M_K;\Z[\Z])\times FH^{k}(M_K;\Z[\Z])\to \Q(\Z)/\Z[\Z].
\]
A submodule $j\colon L\hookrightarrow FH^{k}(M_K;\Z[\Z])$, is called a \emph{lagrangian} if the sequence
\[
0\to L\xrightarrow{j}FH^{k}(M_K;\Z[\Z])\xrightarrow{j^\wedge\circ (Bl)^{ad}} L^\wedge\to 0
\]
is exact, where as above `$\wedge$' denotes the functor $-^\wedge:= \Hom_{\Z[\Z]}(-,\Q(\Z)/\Z[\Z])$.

\begin{definition}\label{def:algslice}
An odd-dimensional knot $K$ is \emph{algebraically slice} if the middle-dimensional Blanchfield pairing admits a lagrangian. The knot $K$ is \emph{algebraically doubly slice} if the middle-dimensional Blanchfield pairing admits two lagrangians that are complementary as submodules.
\end{definition}

\noindent The following corollary is immediate from \fullref{prop:obvs} and \fullref{thm:A-B-submodules}.

\begin{corollary} If an odd-dimensional knot $K$ is strongly algebraically $($doubly$)$ slice then $K$ is algebraically $($doubly$)$ slice.
\end{corollary}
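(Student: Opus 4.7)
The plan is to show that the $\Z$-torsion-free image of $f^\ast$ on middle-dimensional cohomology gives a lagrangian of the Blanchfield pairing. Write the dimension of $K$ as $n = 2k-3$ with $k>1$, so that the self-dual Blanchfield pairing sits in degree $k$. Given a strong algebraic slice $(f\colon C_K \to D, (\delta\phi_K, \phi_K))$, I would define
\[
L := \im\bigl(Ff^\ast \colon FH^k(D;\Z[\Z]) \to FH^k(C_K;\Z[\Z])\bigr),
\]
viewed inside $FH^k(M_K;\Z[\Z])$ via the identification $\widetilde{H}^\ast(M_K;\Z[\Z]) \cong H^\ast(C_K;\Z[\Z])$. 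Since $n+3-k=k$, \fullref{thm:A-B-submodules} applied with $i=k$ yields $Bl(L\times L)=0$, so $L\subseteq L^{\perp}$.

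To upgrade this to the equality $L=L^{\perp}$, I would reuse the commutative diagram from the proof of \fullref{thm:A-B-submodules} specialised to $i=k$. Its top row is the segment $FH^k(D) \xrightarrow{Ff^\ast} FH^k(C_K) \xrightarrow{\delta} FH^{k+1}(D,C_K)$ of the long exact sequence of the pair, the bottom row is its $\Q(\Z)/\Z[\Z]$-dual, and the vertical arrows are the Blanchfield isomorphisms of \fullref{prop:blanchfieldwithcxs}. Commutativity of the right-hand square gives $(Ff^\ast)^\wedge \circ Bl^{ad} = Bl \circ \delta$ as maps $FH^k(C_K) \to (FH^k(D))^\wedge$. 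Hence $x\in L^{\perp}$ if and only if $(Ff^\ast)^\wedge \circ Bl^{ad}(x) = 0$, which is equivalent to $\delta(x)=0$ by injectivity of $Bl$ on the right, and hence to $x \in \ker(\delta) = \im(Ff^\ast) = L$ by exactness of the top row. Combined with the exactness of $\Hom(-,\Q(\Z)/\Z[\Z])$ on $\Z[\Z]$-torsion modules, the equality $L=L^\perp$ yields a short exact sequence $0 \to L \to FH^k(M_K;\Z[\Z]) \to L^{\wedge} \to 0$, exhibiting $L$ as a lagrangian.

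For the doubly slice case, given complementary strong algebraic slices $(f_\pm \colon C_K \to D_\pm, (\delta_\pm\phi_K, \phi_K))$, each $L_\pm := \im(Ff_\pm^\ast)$ is a lagrangian by the argument above. Complementarity says $\left(\begin{smallmatrix}f_+\\f_-\end{smallmatrix}\right)\colon C_K \to D_+ \oplus D_-$ is a chain equivalence, so the induced map
\[
Ff_+^\ast + Ff_-^\ast\colon FH^k(D_+;\Z[\Z]) \oplus FH^k(D_-;\Z[\Z]) \xrightarrow{\cong} FH^k(C_K;\Z[\Z])
\]
is an isomorphism. This immediately yields $L_+ \cap L_- = 0$ and $L_+ + L_- = FH^k(M_K;\Z[\Z])$, the required complementary decomposition. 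The most delicate step of the plan is the identification $L^{\perp} = L$: because $F$ is not exact in general, one must verify that applying $F$ preserves the relevant portion of the long exact sequence of the pair and that $Bl^{ad}$ is compatible with the $F$-quotients appearing in the diagram. Both facts follow from the structural results on $\Z[\Z]$-modules already invoked in the proof of \fullref{prop:blanchfieldwithcxs}.
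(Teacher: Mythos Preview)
Your proposal is correct and follows the approach the paper has in mind. The paper simply declares the corollary ``immediate from \fullref{prop:obvs} and \fullref{thm:A-B-submodules}'' and gives no further argument; you have written out what that sentence suppresses. In particular, \fullref{thm:A-B-submodules} as stated only yields that $FA^k$ is isotropic for $Bl$, not that it is a lagrangian, and you correctly extract the lagrangian property from the commutative diagram appearing in the \emph{proof} of \fullref{thm:A-B-submodules}: the right-hand square together with the nonsingularity of the relative Blanchfield pairing identifies $L^{\perp}$ with $\ker(F\delta)$, and exactness of the $F$-applied row identifies this with $\im(Ff^*)=L$. Your treatment of the doubly slice case via the chain equivalence $\bigl(\begin{smallmatrix}f_+\\ f_-\end{smallmatrix}\bigr)$ is exactly the intended one.

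The only point worth a remark is the one you already flag: exactness of the top row after applying $F$. This is not automatic, since $F$ is not an exact functor, but the paper itself asserts in the proof of \fullref{thm:A-B-submodules} that the diagram ``has exact rows coming from the long exact sequence of the map $f\colon C_K\to D$'', so you are relying on nothing beyond what the paper already uses. (The reference to \fullref{prop:obvs} in the paper's one-line justification is not actually needed for this direction and appears to be context rather than an ingredient.)
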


\begin{remark}
For $n>1$, an $n$-knot is strongly algebraically slice if and only if it is slice. One proof of this fact is that for $n>1$, there is an isomorphism from the $n$-dimensional knot concordance group to the symmetric $L$-group $\mathcal{C}_n\to L^{n+3}(\Z[\Z],S)$ given by sending $K$ to the class of $(C_K,\phi_K)$. An algebraic nullcobordism is then exactly the condition required for vanishing in $L^{n+3}(\Z[\Z],S)$.

Levine \cite{Levine-doubly-slice} constructed examples of knots whose cohomology rings failed to decompose into complementary subalgebras, with $\Q[\Z]$-coefficients, implying these knots are not strongly algebraically doubly slice. However as Levine observes in that paper, and is still the case, there are no known examples of this phenomenon for algebraically doubly slice knots. There are also no known examples of this phenomenon that use knots with $\pi_K\cong\Z$. It seems likely that understanding the difference between algebraically slice and strongly algebraically slice, for knots with $\pi_K\cong\Z$, could lead to a substantial characterisation of double-sliceness high-dimensionally. This would also clarify exactly how much abelian invariants can say about the doubly slice problem and where the techniques of Ruberman and of this paper become essential to the question.
\end{remark}

\bibliographystyle{alpha}
\bibliography{metabelian}
\end{document}